\newtheorem{theorem}{Theorem}[section]
\newtheorem{lemma}[theorem]{Lemma}
\newtheorem{prop}[theorem]{Proposition}
\newtheorem{corollary}[theorem]{Corollary}
\newtheorem{rem}[theorem]{Remark}
\newcommand{\hypg}{{}_2 F_1}
\newcommand{\al}{\alpha}
\newcommand{\ep}{\varepsilon}
\newcommand{\pa}{\partial}
\newenvironment{proof}{\begin{trivlist} \item[] {\em Proof:}}{\hfill $\Box$
                       \end{trivlist}}
\renewcommand*\l@section{\@dottedtocline{1}{0em}{1.5em}}
\renewcommand*\l@subsection{\@dottedtocline{2}{1.5em}{2.3em}}
\renewcommand*\l@subsubsection{\@dottedtocline{3}{3.8em}{3.7em}}
\numberwithin{equation}{section}
\begin{document}

\title{On the existence of stationary patches}

\author{Javier G\'omez-Serrano}

\maketitle

\begin{abstract}

In this paper, we show the existence of a family of analytic stationary patch solutions of the SQG and gSQG equations. This answers an open problem in [F. de la Hoz, Z. Hassainia, T. Hmidi. \textit{Arch. Ration. Mech. Anal.}, 220(3):1209-1281, 2016].

\vskip 0.3cm

\textit{Keywords: incompressible, surface quasi-geostrophic, bifurcation theory, stationary, patch}

\end{abstract}

%\tableofcontents

\section{Introduction}

In this paper, we consider the generalized surface-quasi\-geo\-stro\-phic equations (gSQG):
\begin{align}\label{Int1}
\left\{ \begin{array}{ll}
\partial_{t}\theta+u\cdot\nabla\theta=0,\quad(x,t)\in\mathbb{R}^2\times\mathbb{R}_+, &\\
u=-\nabla^\perp(-\Delta)^{-1+\frac{\alpha}{2}}\theta,\\
\theta_{|t=0}=\theta_0,
\end{array} \right.
\end{align}
where $\alpha \in (0,2)$. The case $\alpha = 1$ corresponds to the surface quasi-geostrophic (SQG) equation and the limiting case $\alpha = 0$ refers to the 2D incompressible Euler equation. The case $\alpha = 2$ produces stationary solutions.

The pioneering articles of Constantin--Majda--Tabak \cite{Constantin-Majda-Tabak:formation-fronts-qg} and Held--Pierrehumbert--Garner--Swanson \cite{Held-Pierrehumbert-Garner-Swanson:sqg-dynamics} motivated the study of the SQG $(\al = 1)$ from a mathematical point of view. Since then, a lot of effort has been devoted to understanding these equations: the problem of whether the gSQG system presents global solutions or not is yet not completely understood.

The existence of weak solutions starts with the work of Resnick \cite{Resnick:phd-thesis-sqg-chicago}, where he proves the existence of global weak solutions in $L^2$ in the SQG case $\al = 1$.  In bounded domains, Constantin--Nguyen and Nguyen \cite{Constantin-Nguyen:global-weak-solutions-sqg-bounded-domains,Nguyen:global-weak-solutions-sqg-bounded-domains} proved that the same results hold. Buckmaster--Shkoller--Vicol \cite{Buckmaster-Shkoller-Vicol:nonuniqueness-sqg} have shown non-uniqueness of weak solutions for the SQG equation in certain spaces less regular than $L^{2}$. See also \cite{Marchand:existence-regularity-weak-solutions-sqg}, \cite{Chae-Constantin-Cordoba-Gancedo-Wu:gsqg-singular-velocities}, and \cite{Nahmod-Pavlovic-Staffilani-Totz:global-invariant-measures-gsqg} for more general classes of weak solutions.

In this paper, we will focus on a particular class of weak solutions, the so-called $\alpha$-\textit{patches}, which are solutions for which $\theta$ is  a step function
\begin{align}\label{pa1}
  \theta(x,t) =
  \left\{
 \begin{array}{ll}
   \theta_1, \text{ if } \ \ x \in \Omega(t) \\ 
   \theta_2, \text{ if }  \ \ x \in \Omega(t) ^c. \\
   \end{array}
  \right.
\end{align}
where $\Omega(0)\subset\mathbb{R}^2$ is a regular set given by the initial distribution of $\theta$, $\theta_1$ and $\theta_2$ are constants, and $\Omega(t)$ is the evolution of $\Omega(0)$ under the velocity field $u$. 

In this setting, local existence of patch solutions has been obtained by Rodrigo \cite{Rodrigo:evolution-sharp-fronts-qg} (for a $C^{\infty}$ boundary $\pa \Omega(0)$ in the case $\alpha = 1$), Gancedo \cite{Gancedo:existence-alpha-patch-sobolev} (for Sobolev regularity and $0 < \alpha \leq 1$) and Chae--Constantin--Cordoba--Gancedo--Wu \cite{Chae-Constantin-Cordoba-Gancedo-Wu:gsqg-singular-velocities} in the more singular case $1 < \alpha < 2$. Uniqueness for the patch equations was proved for $0 < \al < 1$ by Kiselev--Yao--Zlatos \cite{Kiselev-Yao-Zlatos:local-regularity-sqg-patch-boundary} and for $\al = 1$ by C\'ordoba--C\'ordoba--Gancedo \cite{Cordoba-Cordoba-Gancedo:uniqueness-sqg-patch}. Garra \cite{Garra:confinement-modified-sqg} obtained estimates of the growth of the support of the patch in time for $0 < \al < 1$. See also \cite{Hunter-Shu:regularized-approximate-model-sqg,Hunter-Shu-Zhang:local-wellposedness-model-sqg} for local existence results of cubic models of the $\alpha$-patch problem in the range $0 < \alpha \leq 1$.

Several authors have done numerical simulations suggesting finite time singularities. There are two scenarios: the first one (done by C\'ordoba--Fontelos--Mancho--Rodrigo \cite{Cordoba-Fontelos-Mancho-Rodrigo:evidence-singularities-contour-dynamics}), starting from two patches, suggests an asymptotically self-similar collapse between the two patches, and at the same time a blowup of the curvature at the touching point; the second one (by Scott--Dritschel \cite{Scott-Dritschel:self-similar-sqg}) evolves a thin elliptical patch and indicates a self-similar filamentation cascade ending at a singularity with a blowup of the curvature. This is consistent with the rule out of splash singularities by Gancedo--Strain \cite{Gancedo-Strain:absence-splash-muskat-SQG}. In the case with boundaries (more concretely on the halfspace), Kiselev--Ryzhik--Yao--Zlatos \cite{Kiselev-Ryzhik-Yao-Zlatos:singularity-alpha-patch-boundary} proved the formation of finite time singularities for certain patches that touch the boundary at all times.

Very little is known concerning nontrivial global solutions for the gSQG equations. C\'ordoba--G\'omez-Serrano--Ionescu \cite{Cordoba-GomezSerrano-Ionescu:global-generalized-sqg-patch} proved a generic global existence result for small solutions in the case $1 < \al < 2$, with initial data $\pa \Omega(0)$ close to the halfplane. 

Another perspective is to look for uniformly rotating solutions. These solutions are known as V-states. Deem--Zabusky \cite{Deem-Zabusky:vortex-waves-stationary} investigated this problem numerically and found the first set of families bifurcating from disks. Since then, there has been work by other authors improving the methods and computing larger classes (see for example \cite{Wu-Overman-Zabusky:steady-state-Euler-2d,Elcrat-Fornberg-Miller:stability-vortices-cylinder,LuzzattoFegiz-Williamson:efficient-numerical-method-steady-uniform-vortices,Saffman-Szeto:equilibrium-shapes-equal-uniform-vortices}).

Bifurcating from disks, Hassainia--Hmidi \cite{Hassainia-Hmidi:v-states-generalized-sqg} proved the existence of V-states with $C^k$ boundary regularity in the case $0 < \alpha < 1$. In \cite{Castro-Cordoba-GomezSerrano:existence-regularity-vstates-gsqg}, Castro--C\'ordoba--G\'omez-Serrano showed existence and $C^{\infty}$ regularity of convex global rotating solutions for the remaining open cases: $\al \in [1,2)$ for the existence, $\al \in (0,2)$ for the regularity. This boundary regularity was subsequently improved to analytic in \cite{Castro-Cordoba-GomezSerrano:analytic-vstates-ellipses}. See also \cite{Hmidi-Mateu:existence-corotating-counter-rotating} for another family of rotating solutions.

Another scenario that has been investigated is the doubly connected case. Bifurcating from annuli, de la Hoz--Hassainia--Hmidi \cite{delaHoz-Hassainia-Hmidi:doubly-connected-vstates-gsqg} established the existence of doubly connected $C^k$ V-states for $0 < \alpha < 1$, and Renault \cite{Renault:relative-equlibria-holes-sqg} proved their existence for $\alpha = 1$ in the analytic setting. In their paper, de la Hoz--Hassainia--Hmidi perform numerical simulations that suggest the existence of certain V-states with zero angular velocity and pose the question of establishing analytically the existence of stationary V-states (cf. \cite[p.1213, Remark 2]{delaHoz-Hassainia-Hmidi:doubly-connected-vstates-gsqg}).

Our goal in this paper is to solve this open question, and prove the existence of stationary patches of the gSQG equation for all $0 < \alpha < 2$. To our knowledge, this is the first nontrivial construction of stationary solutions for any $\alpha$. 

The main difficulty is that even if one could find an annulus from which bifurcate at $\Omega^{*} = 0$ using the previous ideas, there is no control on the branch and it is not clear if the continuation of the branch would intersect $\Omega = 0$ at a nontrivial point or only at the bifurcation one (which is an annulus). Another possibility is to study the local behaviour of the branch close to a bifurcation point of sufficiently small $\Omega^{*}$. However, this approach would require a nontrivial quantitative (or uniform in $\Omega^*$) control of the neighbourhoods in which the local approximation is accurate. In order to circumvent these issues, we impose stationarity and look for a different parameter in which perform the bifurcation analysis. In our case, this will be the inner radius of the annulus $b$. Specifically, we will find that for every $m \geq 2$, there exists a certain radius $b_{m}^*$ at which nontrivial stationary $m$-fold solutions bifurcate from the annulus. The precise theorem is stated in Theorem \ref{teoremaestacionarias} below. This choice of the parameter leads to a nontrivial spectral analysis in which one has to check carefully all the conditions from the Crandall-Rabinowitz \cite{Crandall-Rabinowitz:bifurcation-simple-eigenvalues} theorem.

From now on, we will assume that $\theta_2 - \theta_1 = 1$.

\subsection{The equations}

 The evolution equation for the interface of an annular $\alpha-$patch, which we parametrize as two $2 \pi$ periodic curves $Z(x)$ (outer boundary) and $z(x)$ (inner), can be written as
 \begin{align}
\label{Ecuacion-alpha-patch}
\partial_t Z(x,t) \cdot \pa_{x}^{\perp}Z(x,t) & = \left(-S(Z,Z) + S(z,Z)\right) \cdot \pa_{x}^{\perp}Z(x,t) \\
\partial_t z(x,t) \cdot \pa_{x}^{\perp}z(x,t)&  = \left(-S(Z,z) + S(z,z)\right) \cdot \pa_{x}^{\perp}z(x,t) \\
S(p,q) & = c_{\al} \int_{0}^{2\pi} \frac{\pa_{x} p(x-y) - \pa_{x} q(x)}{|p(x-y) - q(x)|^{\al}}dy,
 \end{align}

where the normalizing constant $c_{\al}$ is given by:

\begin{align*}
c_{\al} = \frac{1}{2\pi} \frac{\Gamma\left(\frac{\al}{2}\right)}{2^{1-\al}\Gamma\left(\frac{2-\al}{2}\right)}.
\end{align*}

Let $z(x,t) = (b+r(x,t))(\cos(x),\sin(x)), Z(x,t) = (1+R(x,t))(\cos(x),\sin(x))$ be the inner and outer boundaries of the patch respectively, where $b$ is a constant. Imposing stationarity, we are left to solve the following system for $(r,R) \equiv (r(x),R(x))$ and $b$:

\begin{align}\label{funcionalestacionarias}
0 = F^{1}(b,R,r) & = T_{1}(1+R) + T_{2}(b+r,1+R) \nonumber \\
0 = F^{2}(b,R,r) & = -T_{2}(1+R,b+r) - T_{1}(b+r),
\end{align}

where

\begin{align*}
T_{1}(u) &= c_{\al} \int_{0}^{2\pi} \frac{\cos(x-y)(u'(y)u(x)-u(y)u'(x))}{\left(u(x)^2+u(y)^2-2u(x)u(y)\cos(x-y)\right)^{\al/2}}dy \\
& +c_{\al} \int_{0}^{2\pi} \frac{\sin(x-y)(u(x)u(y)+u'(x)u'(y)}{\left(u(x)^2+u(y)^2-2u(x)u(y)\cos(x-y)\right)^{\al/2}}dy \\
T_{2}(p,q) & = c_{\al} \int_{0}^{2\pi} \frac{\cos(x-y)(p(y)q'(x)-p'(y)q(x))}{\left(q(x)^2+p(y)^2-2q(x)p(y)\cos(x-y)\right)^{\al/2}}dy \\
& -c_{\al} \int_{0}^{2\pi} \frac{\sin(x-y)(p(y)q(x)+p'(y)q'(x))}{\left(q(x)^2+p(y)^2-2q(x)p(y)\cos(x-y)\right)^{\al/2}}dy \\
\end{align*}

We remark that the case $r = R = 0$ corresponds to an annulus of radii $b$ and 1, yielding a stationary (though trivial) solution for any $0 < b < 1$.

\subsection{Functional spaces}

We refer to the space of analytic functions in the strip $|\Im(z)| \leq c$ as $\mathcal{C}_{w}(c)$. In our proofs, we will use the following analytic spaces. For $k \in \mathbb{Z}$:

\begin{align*}
X^{k}_{c} & = \left\{f(x) \in \mathcal{C}_{w}(c), f(x) = \sum_{j=1}^{\infty}a_{j} \cos(jx), \sum_{\pm}\int |f(x \pm ic)|^{2}dx + \sum_{\pm}\int |\pa^{k} f(x\pm ic)|^{2}dx < \infty\right\} \\
X^{k,m}_{c} & = \left\{f(x) \in \mathcal{C}_{w}(c), f(x) = \sum_{j \geq 1}^{\infty}a_{jm} \cos(jmx), \sum_{\pm}\int |f(x \pm ic)|^{2}dx + \sum_{\pm}\int |\pa^{k} f(x\pm ic)|^{2}dx < \infty\right\} \\
Y^{k}_{c} & = \left\{f(x) \in \mathcal{C}_{w}(c), f(x) = \sum_{j=1}^{\infty}a_{j} \sin(jx), \sum_{\pm}\int |f(x \pm ic)|^{2}dx + \sum_{\pm}\int |\pa^{k} f(x\pm ic)|^{2}dx < \infty\right\} \\
Y^{k,m}_{c} & = \left\{f(x) \in \mathcal{C}_{w}(c), f(x) = \sum_{j \geq 1}^{\infty}a_{jm} \sin(jmx), \sum_{\pm}\int |f(x \pm ic)|^{2}dx + \sum_{\pm}\int |\pa^{k} f(x\pm ic)|^{2}dx < \infty\right\} \\
X^{k+\al}_{c} & = \left\{f(x) \in \mathcal{C}_{w}(c), f(x) = \sum_{j=1}^{\infty}a_{j} \cos(jx), \sum_{\pm}\int |f(x \pm ic)|^{2}dx + \sum_{\pm}\int |\pa^{k} f(x\pm ic)|^{2}dx \right. \\
& + \left.\sum_{\pm} \left\|\int_{\mathbb{T}} \frac{\pa^{k}f(x\pm ic -y)-\pa^{k}f(x \pm ic)}{\left|\sin\left(\frac{y}{2}\right)\right|^{1+\al}}dy\right\|_{L^2(x)} < \infty\right\}, \quad \al \in (0,1) \\
X^{k+\al,m}_{c} & = \left\{f(x) \in \mathcal{C}_{w}(c), f(x) = \sum_{j=1}^{\infty}a_{jm} \cos(jmx), \sum_{\pm}\int |f(x \pm ic)|^{2}dx + \sum_{\pm}\int |\pa^{k} f(x\pm ic)|^{2}dx \right. \\
& + \left.\sum_{\pm} \left\|\int_{\mathbb{T}} \frac{\pa^{k}f(x\pm ic -y)-\pa^{k}f(x \pm ic)}{\left|\sin\left(\frac{y}{2}\right)\right|^{1+\al}}dy\right\|_{L^2(x)} < \infty\right\}, \quad \al \in (0,1) \\
X^{k+\log}_{c} & = \left\{f(x) \in \mathcal{C}_{w}(c), f \in X^{k}_{c}, f(x) = \sum_{j=1}^{\infty}a_{j} \cos(jx), \sum_{\pm} \left\|\int_{\mathbb{T}} \frac{\pa^{k}f(x\pm ic -y)-\pa^{k}f(x \pm ic)}{\left|\sin\left(\frac{y}{2}\right)\right|}dy\right\|_{L^2(x)} < \infty\right\} \\
X^{k+\log,m}_{c} & = \left\{f(x) \in \mathcal{C}_{w}(c), f \in X^{k}_{c}, f(x) = \sum_{j=1}^{\infty}a_{jm} \cos(jmx), \sum_{\pm} \left\|\int_{\mathbb{T}} \frac{\pa^{k}f(x \pm ic -y)-\pa^{k}f(x \pm ic)}{\left|\sin\left(\frac{y}{2}\right)\right|}dy\right\|_{L^2(x)} < \infty\right\}.
\end{align*}

The norm is given in the last two cases by the sum of the $X^{k}_{c}$-norm and the additional finite integral in the definition. %We give an alternative characterization of the $X^{k+\log}$ spaces.  This will be useful for the spectral study.

%[Quiza citar la caracterizacion de los espacios log del otro paper, no se si nos sera util]

\subsection{Theorems and outline of the proofs}

 The paper is organized as follows. In Section \ref{sectionproof}, we prove the following theorem:

\begin{theorem}
\label{teoremaestacionarias}
 Let $k\geq 3, m \in \mathbb{N}, m \geq 2, 0 < \alpha < 2$ and let $0 < b_{m}^{*} < 1$ be defined in Proposition \ref{propbstar}.
Then, there exists a family of $m$-fold stationary solutions $(b,R,r)$ and a $c > 0$, where $(R(x),r(x)) \in X^{k+1,m}_{c} \times X^{k+1,m}_{c}$ (for $\al < 1$), $(R(x),r(x)) \in X^{k+1+\log,m}_{c} \times X^{k+1+\log,m}_{c}$ (for $\al = 1$) or $(R(x),r(x)) \in X^{k+\al,m}_{c} \times X^{k+\al,m}_{c}$ (for $\al > 1$) of the equation \eqref{funcionalestacionarias} with $0 < \al < 2$ that bifurcate from the annulus of radii $1$ and $b_{m}^{*}$.
\end{theorem}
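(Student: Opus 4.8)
The plan is to apply the Crandall--Rabinowitz bifurcation theorem from simple eigenvalues to the map $F = (F^1, F^2)$ defined in \eqref{funcionalestacionarias}, using the inner radius $b$ as the bifurcation parameter and viewing $(R,r)$ as the unknown curve in the appropriate $m$-fold analytic space ($X^{k+1,m}_c$ for $\al<1$, the logarithmically corrected space for $\al=1$, and $X^{k+\al,m}_c$ for $\al>1$). The trivial branch is $(b,0,0)$ for $0<b<1$, which corresponds to the annulus; so the first step is to set up $F$ as a $C^1$ (in fact real-analytic) map between Banach spaces in a neighbourhood of $(b_m^*, 0, 0)$. This functional-analytic setup is the technical backbone: one must verify that $T_1$ and $T_2$ map the chosen analytic spaces continuously and differentiably into the target spaces, which requires controlling the singular kernels $|p(x-y)-q(x)|^{-\al}$ (including the contour-dynamics cancellations that make the principal-value integrals well-defined) and showing that analyticity in the strip $|\Im z|\le c$ is preserved, shrinking $c$ if necessary. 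I expect this to be largely parallel to the regularity framework already developed in \cite{Castro-Cordoba-GomezSerrano:existence-regularity-vstates-gsqg,Castro-Cordoba-GomezSerrano:analytic-vstates-ellipses}, adapted to the doubly connected geometry.

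The heart of the argument is the spectral analysis of the linearized operator $D_{(R,r)}F(b,0,0)$. I would linearize \eqref{funcionalestacionarias} at the annulus: because the background is rotationally symmetric, the linearization diagonalizes on Fourier modes, so testing against $(\cos(jmx),\cos(jmx))$ (or the appropriate pair of cosine modes for $R$ and $r$) reduces the operator to a sequence of $2\times2$ matrices $M_{jm}(b)$ whose entries are explicit functions of $b$, $\al$, $m$, and $j$, built from the Fourier symbols of the gSQG kernel (these involve ratios of Gamma functions and hypergeometric-type expressions, matching the symbols appearing in \cite{delaHoz-Hassainia-Hmidi:doubly-connected-vstates-gsqg,Hassainia-Hmidi:v-states-generalized-sqg}). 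The key is Proposition \ref{propbstar} (which I am allowed to assume): it should assert that there is a unique $b_m^*\in(0,1)$ at which $\det M_m(b_m^*) = 0$ while $\det M_{jm}(b_m^*)\neq 0$ for all $j\ge 2$, and that the vanishing is simple in $b$, i.e. $\frac{d}{db}\det M_m(b)\big|_{b=b_m^*}\neq 0$. Granting this, the kernel of $D_{(R,r)}F(b_m^*,0,0)$ is one-dimensional, spanned by the null vector of $M_m(b_m^*)$ in the first Fourier mode; one then checks that the range is closed and of codimension one (Fredholm of index zero follows since away from the first mode all $M_{jm}(b_m^*)$ are invertible with symbols bounded away from zero, so the operator is a compact perturbation of an isomorphism on the orthogonal complement).

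The final Crandall--Rabinowitz hypothesis to verify is the transversality (non-degeneracy) condition: $\partial_b D_{(R,r)}F(b_m^*,0,0)$ applied to the kernel direction is not in the range of $D_{(R,r)}F(b_m^*,0,0)$. Concretely this amounts to the statement that $\frac{d}{db}\det M_m(b)\big|_{b=b_m^*}\neq 0$, i.e. precisely the simplicity of the zero stated in Proposition \ref{propbstar}: the $\partial_b$-derivative moves the eigenvalue of $M_m(b)$ transversally through zero, so the image of the null vector under $\partial_b D_{(R,r)}F$ has a nonzero component along the cokernel direction. With all four hypotheses (smoothness, one-dimensional kernel, codimension-one range, transversality) in hand, Crandall--Rabinowitz yields a local $C^1$ curve $s\mapsto (b(s),R(s),r(s))$ of solutions through $(b_m^*,0,0)$ with $(R(s),r(s))$ nontrivial for $s\neq 0$ and tangent to the kernel direction, which lies in $X^{k+1,m}_c\times X^{k+1,m}_c$ (resp.\ the log or $\al>1$ space); by construction these are $m$-fold stationary patches bifurcating from the annulus of radii $1$ and $b_m^*$. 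I expect the main obstacle to be the functional setting for $1\le\al<2$: there the velocity kernel is more singular, the contour-dynamics cancellation is more delicate, and one must carefully track in which space $\al$-derivative (or log) losses are absorbed so that $F$ is genuinely $C^1$ between the stated spaces and the linearization is the Fredholm operator described above; by contrast, the $2\times 2$ symbol computation and the verification of Proposition \ref{propbstar}'s hypotheses, while laborious, are essentially explicit special-function estimates.
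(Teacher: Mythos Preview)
Your proposal is correct and follows essentially the same Crandall--Rabinowitz strategy as the paper: set up $F$ on the analytic $m$-fold spaces, diagonalize $D_{(R,r)}F(b,0,0)$ into $2\times2$ blocks $M_n^\alpha(b)$, invoke Proposition~\ref{propbstar} for the one-dimensional kernel, and then check codimension one and transversality.

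Two small points of divergence are worth noting. First, Proposition~\ref{propbstar} as stated gives existence, uniqueness, and monotonicity of $b_m^*$, but does \emph{not} explicitly assert $\frac{d}{db}\det M_m^\alpha(b)\big|_{b=b_m^*}\neq 0$; this indeed follows from the strict monotonicity of $Q_+(b,m)$ established in its proof, and is equivalent (via Jacobi's formula) to the transversality condition you state. The paper, however, verifies transversality in Step~5 by a direct sign analysis of the components of $\partial_b M_m^\alpha(b_m^*)\, v_0$ against those of the image generator $w$, rather than by appealing to the determinant derivative. Second, your phrase ``symbols bounded away from zero'' for the codimension-one check is slightly too coarse: the paper needs the precise asymptotics of $\Delta_{km}^\alpha(b_m^*)$ as $k\to\infty$ (Lemma~\ref{lemmaasymptoticdet} and Corollary~\ref{corinvtheta}) to show that the inverse gains exactly the right amount of regularity (one derivative for $\alpha<1$, one derivative plus a log for $\alpha=1$, $\alpha$ derivatives for $\alpha>1$) so that the preimage actually lands in the stated $X$-space. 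Your Fredholm/compact-perturbation heuristic is morally right but would need these asymptotics to be made rigorous.
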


The proof will be carried out by means of a combination of a Crandall-Rabinowitz theorem and a priori estimates. Finally, in the Appendices we will include useful formulas and identities involving the special functions that appear throughout the proofs.

\section{Checking the hypotheses}
\label{sectionproof}

 The proof will be divided into 6 steps. These steps correspond to check the hypotheses of the Crandall-Rabinowitz theorem \cite{Crandall-Rabinowitz:bifurcation-simple-eigenvalues} for
 
 \begin{align*}
 F(b,R,r) = (F^{1}(b,R,r),F^{2}(b,R,r)),
 \end{align*}
 
 with
 
 \begin{align}
F^{1}(b,R,r) & = T_{1}(1+R) + T_{2}(b+r,1+R) \nonumber \\
F^{2}(b,R,r) & = -T_{2}(1+R,b+r) - T_{1}(b+r),
\end{align}

and

\begin{align*}
T_{1}(u) &= c_{\al} \int_{0}^{2\pi} \frac{\cos(y)(u'(x-y)u(x)-u(x-y)u'(x))}{\left(u(x)^2+u(x-y)^2-2u(x)u(x-y)\cos(y)\right)^{\al/2}}dy \\
& +c_{\al} \int_{0}^{2\pi} \frac{\sin(y)(u(x)u(x-y)+u'(x)u'(x-y)}{\left(u(x)^2+u(x-y)^2-2u(x)u(x-y)\cos(y)\right)^{\al/2}}dy \\
&= c_{\al} \int_{0}^{2\pi} \frac{\cos(y)(u'(x-y)u(x)-u(x-y)u'(x))}{\left(2-2\cos(y)\right)^{\al/2}}\left(\frac{2-2\cos(y)}{u(x)^2+u(x-y)^2-2u(x)u(x-y)\cos(y)}\right)^{\al/2}dy \\
& +c_{\al} \int_{0}^{2\pi} \frac{\sin(y)(u(x)u(x-y)+u'(x)u'(x-y)}{\left(2-2\cos(y)\right)^{\al/2}}\left(\frac{2-2\cos(y)}{u(x)^2+u(x-y)^2-2u(x)u(x-y)\cos(y)}\right)^{\al/2}dy \\
T_{2}(p,q) & = c_{\al} \int_{0}^{2\pi} \frac{\cos(y)(p(x-y)q'(x)-p'(x-y)q(x))}{\left(q(x)^2+p(x-y)^2-2q(x)p(x-y)\cos(y)\right)^{\al/2}}dy \\
& -c_{\al} \int_{0}^{2\pi} \frac{\sin(y)(p(x-y)q(x)+p'(x-y)q'(x))}{\left(q(x)^2+p(x-y)^2-2q(x)p(x-y)\cos(y)\right)^{\al/2}}dy \\
\end{align*}

The hypotheses are the following:

\begin{enumerate}

\item The functional $F$ satisfies $$F(b,R,r)\,:\, (0,1)\times \{V^{\ep}\}\mapsto Y^{k-1}_{c} \times Y^{k-1}_{c},$$ where $V^{\ep}$ is the open neighbourhood of 0

\begin{align*}
V^{\ep}=\left\{ 
\begin{array}{cc}
(f,g)\in X^{k}_{c} \times X^{k}_{c}\,:\, ||f||_{X^{k}_{c}}+||g||_{X^{k}_{c}}<\ep & \text{ if } \al < 1 \\
(f,g)\in X^{k+\log}_{c} \times X^{k+\log}_{c}\,:\, ||f||_{X^{k+\log}_{c}}+||g||_{X^{k+\log}_{c}}<\ep & \text{ if } \al = 1 \\
(f,g)\in X^{k+\al-1}_{c} \times X^{k+\al-1}_{c}\,:\, ||f||_{X^{k+\al-1}_{c}}+||g||_{X^{k+\al-1}_{c}}<\ep & \text{ if } \al > 1
\end{array}
\right.,
\end{align*}
for all $0<\ep<\ep_{0}(m)$ and $k\geq 3$.

\item $F(b,0,0) = 0$ for every $0 < b < 1$.
\item The partial derivatives $F_{r}$, $F_{R}$, $F_{bR}$ and $F_{br}$ exist and are continuous.
\item Ker($\mathcal{F}$) and $Y^{k-1}_{c}$/Range($\mathcal{F}$) are one-dimensional, where $\mathcal{F}$ is the linearized operator around $r = R = 0$ at $b = b^{*}_{m}$ (see Proposition \ref{propbstar} for a definition of $b_{m}^{*}$).
\item $\pa_{b} DF(b_{m}^{*},0,0)[h_0] \not \in$ Range($\mathcal{F}$), where Ker$(\mathcal{F}) = \langle h_0 \rangle$.
\item Step 1 can be applied to the spaces:

\begin{align*}
\left\{
\begin{array}{cc}
X^{k,m}_{c} \times X^{k,m}_{c} & \text{ if } \al < 1 \\
X^{k+\log,m}_{c} \times X^{k+\log,m}_{c} & \text{ if } \al = 1 \\
X^{k+\al-1,m}_{c} \times X^{k+\al-1,m}_{c} & \text{ if } \al > 1
\end{array}
\right.
\end{align*}

and $Y^{k-1,m}_{c} \times Y^{k-1,m}_{c}$ instead of

\begin{align*}
\left\{
\begin{array}{cc}
X^{k}_{c} \times X^{k}_{c} & \text{ if } \al < 1 \\
X^{k+\log}_{c} \times X^{k+\log}_{c} & \text{ if } \al = 1 \\
X^{k+\al-1}_{c} \times X^{k+\al-1}_{c} & \text{ if } \al > 1
\end{array}
\right.
\end{align*}

and $Y^{k-1}_{c} \times Y^{k-1}_{c}$ respectively.

\end{enumerate}

\begin{rem}
For the choices of $u$ that will appear in the Theorem (of the form constant $+ O(\varepsilon)$), the function inside the parenthesis in $T_1(u)$ is uniformly bounded from below in $y$ for every $x$
by a strictly positive constant. Then we can analytically extend the integrand in $x$ to the strip $|\Im(z)| \leq c$ for a small enough $c$.
\end{rem}

\subsection{Step 1}

The regularity step of the functional $F$ was already shown in \cite{Renault:relative-equlibria-holes-sqg} for $\al = 1$, in \cite{delaHoz-Hassainia-Hmidi:doubly-connected-vstates-gsqg} for $\al < 1$ and can be easily adapted from the proof of \cite{Castro-Cordoba-GomezSerrano:analytic-vstates-ellipses} for $\al > 1$.

\subsection{Step 2}

This is trivial since $T_1(1)$, $T_1(b)$, $T_2(1,b)$ and $T_2(b,1)$ consist of integrands which are either zero or odd (and therefore have integral zero).

\subsection{Step 3}

We need to prove the existence and the continuity of the Gateaux derivatives $\pa_{R} F(b,R,r)$, $\pa_{r} F(b,R,r)$, $\pa_{bR} F(b,R,r)$ and $\pa_{br} F(b,R,r)$. We have the following Lemma:

\begin{lemma}
\label{lemagatoderivada}
 For all $(R,r)\in V^r$ and for all $(H,h)\in X$, where $X = (X^{k}_{c} \times X^{k}_{c}), (X^{k+\log}_{c} \times X^{k+\log}_{c})$ or $(X^{k+\al-1}_{c} \times X^{k+\al-1}_{c})$ depending on $\alpha$, such that $||(h,H)||_{X}=1$ we have that:
 
\begin{align}
D_{R}F^{1}(b,R,r)[H] & = c_{\al} \int_{0}^{2\pi} \frac{\cos(y)(H'(x-y)(1+R(x))+R'(x-y)H(x)-H(x-y)R'(x)-(1+R(x-y))H'(x))}{\left(2-2\cos(y)\right)^{\al/2}} \nonumber \\
& \times \left(\frac{2-2\cos(y)}{(1+R(x))^2+(1+R(x-y))^2-2(1+R(x))(1+R(x-y))\cos(y)}\right)^{\al/2}dy \nonumber \\
& - \left(\frac{\al}{2}\right)c_{\al} \int_{0}^{2\pi} \frac{\cos(y)(R'(x-y)(1+R(x))-(1+R(x-y))R'(x))}{\left(2-2\cos(y)\right)^{\al/2}} \nonumber \\
& \times 2\left(\frac{(1+R(x))H(x)+(1+R(x-y))H(x-y)-((1+R(x))H(x-y)+(1+R(x-y))H(x))\cos(y)}{2-2\cos(y)}\right) \nonumber \\
& \times \left(\frac{2-2\cos(y)}{(1+R(x))^2+(1+R(x-y))^2-2(1+R(x))(1+R(x-y))\cos(y)}\right)^{\al/2+1}dy \nonumber \\
& +c_{\al} \int_{0}^{2\pi} \frac{\sin(y)((1+R(x))H(x-y) + H(x)(1+R(x-y)) + R'(x)H'(x-y)+H'(x)R'(x-y)}{\left(2-2\cos(y)\right)^{\al/2}}\nonumber \\
& \times \left(\frac{2-2\cos(y)}{(1+R(x))^2+(1+R(x-y))^2-2(1+R(x))(1+R(x-y))\cos(y)}\right)^{\al/2}dy \nonumber \\
& - \left(\frac{\al}{2}\right)c_{\al} \int_{0}^{2\pi} \frac{\sin(y)((1+R(x-y))(1+R(x))+R'(x-y)R'(x))}{\left(2-2\cos(y)\right)^{\al/2}} \nonumber \\
& \times 2\left(\frac{(1+R(x))H(x)+(1+R(x-y))H(x-y)-((1+R(x))H(x-y)+(1+R(x-y))H(x))\cos(y)}{2-2\cos(y)}\right) \nonumber \\
& \times \left(\frac{2-2\cos(y)}{(1+R(x))^2+(1+R(x-y))^2-2(1+R(x))(1+R(x-y))\cos(y)}\right)^{\al/2+1}dy \nonumber \\
& +c_{\al} \int_{0}^{2\pi} \frac{\cos(y)((b+r(x-y))H'(x)-r'(x-y)H(x))}{\left((1+R(x))^2+(b+r(x-y))^2-2(1+R(x))(b+r(x-y))\cos(y)\right)^{\al/2}}dy \nonumber \\
& - \left(\frac{\al}{2}\right)c_{\al} \int_{0}^{2\pi} \left(\cos(y)((b+r(x-y))R'(x)-r'(x-y)(1+R(x)))\right)\nonumber \\
& \times 2\left(\frac{(1+R(x))H(x)-(b+r(x-y))H(x)\cos(y)}{\left((1+R(x))^2+(b+r(x-y))^2-2(1+R(x))(b+r(x-y))\cos(y)\right)^{\al/2+1}}\right)dy \nonumber \\
& -c_{\al} \int_{0}^{2\pi} \frac{\sin(y)(H(x)(b+r(x-y)) + H'(x)r'(x-y))}{\left((1+R(x))^2+(b+r(x-y))^2-2(1+R(x))(b+r(x-y))\cos(y)\right)^{\al/2}}dy\nonumber \\
& + \left(\frac{\al}{2}\right)c_{\al} \int_{0}^{2\pi} \left(\sin(y)((b+r(x-y))(1+R(x))+r'(x-y)R'(x))\right) \nonumber \\
& \times 2\left(\frac{(1+R(x))H(x)-(b+r(x-y))H(x)\cos(y)}{\left((1+R(x))^2+(b+r(x-y))^2-2(1+R(x))(b+r(x-y))\cos(y)\right)^{\al/2+1}}\right)dy \nonumber \\
D_{r}F^{1}(b,R,r)[h] & = 
 c_{\al} \int_{0}^{2\pi} \frac{\cos(y)(h(x-y)R'(x)-h'(x-y)(1+R(x)))}{\left((1+R(x))^2+(b+r(x-y))^2-2(1+R(x))(b+r(x-y))\cos(y)\right)^{\al/2}}dy \nonumber \\
& - \left(\frac{\al}{2}\right)c_{\al} \int_{0}^{2\pi} \left(\cos(y)((b+r(x-y))R'(x)-r'(x-y)(1+R(x)))\right)\nonumber \\
& \times 2\left(\frac{(b+r(x))h(x)-(1+R(x))h(x-y)\cos(y)}{\left((1+R(x))^2+(b+r(x-y))^2-2(1+R(x))(b+r(x-y))\cos(y)\right)^{\al/2+1}}\right)dy \nonumber \\
& -c_{\al} \int_{0}^{2\pi} \frac{\sin(y)((1+R(x))h(x-y) + R'(x)h'(x-y))}{\left((1+R(x))^2+(b+r(x-y))^2-2(1+R(x))(b+r(x-y))\cos(y)\right)^{\al/2}}dy\nonumber \\
& + \left(\frac{\al}{2}\right)c_{\al} \int_{0}^{2\pi} \left(\sin(y)((b+r(x-y))(1+R(x))+r'(x-y)R'(x))\right) \nonumber \\
& \times 2\left(\frac{(b+r(x))h(x)-(1+R(x))h(x-y)\cos(y)}{\left((1+R(x))^2+(b+r(x-y))^2-2(1+R(x))(b+r(x-y))\cos(y)\right)^{\al/2+1}}\right)dy \nonumber \\
D_{R}F^{2}(b,R,r)[H] & = 
 -c_{\al} \int_{0}^{2\pi} \frac{\cos(y)(H(x-y)r'(x)-H'(x-y)(b+r(x)))}{\left((b+r(x))^2+(1+R(x-y))^2-2(b+r(x))(1+R(x-y))\cos(y)\right)^{\al/2}}dy \nonumber \\
& + \left(\frac{\al}{2}\right)c_{\al} \int_{0}^{2\pi} \left(\cos(y)((1+R(x-y))r'(x)-R'(x-y)(b+r(x)))\right)\nonumber \\
& \times 2\left(\frac{(1+R(x-y))H(x-y)-(b+r(x))H(x-y)\cos(y)}{\left((b+r(x))^2+(1+R(x-y))^2-2(b+r(x))(1+R(x-y))\cos(y)\right)^{\al/2+1}}\right)dy \nonumber \\
& +c_{\al} \int_{0}^{2\pi} \frac{\sin(y)((b+r(x))H(x-y) + r'(x)H'(x-y))}{\left((b+r(x))^2+(1+R(x-y))^2-2(b+r(x))(1+R(x-y))\cos(y)\right)^{\al/2}}dy\nonumber \\
& - \left(\frac{\al}{2}\right)c_{\al} \int_{0}^{2\pi} \left(\sin(y)((1+R(x-y))(b+r(x))+R'(x-y)r'(x))\right) \nonumber \\
& \times 2\left(\frac{(1+R(x-y))H(x-y)-(b+r(x))H(x-y)\cos(y)}{\left((b+r(x))^2+(1+R(x-y))^2-2(b+r(x))(1+R(x-y))\cos(y)\right)^{\al/2+1}}\right)dy \nonumber \\
D_{r}F^{2}(b,R,r)[h] & = 
-c_{\al} \int_{0}^{2\pi} \frac{\cos(y)(h'(x-y)(b+r(x))+r'(x-y)h(x)-h(x-y)r'(x)-(b+r(x-y))h'(x))}{\left(2-2\cos(y)\right)^{\al/2}} \nonumber \\
& \times \left(\frac{2-2\cos(y)}{(b+r(x))^2+(b+r(x-y))^2-2(b+r(x))(b+r(x-y))\cos(y)}\right)^{\al/2}dy \nonumber \\
& + \left(\frac{\al}{2}\right)c_{\al} \int_{0}^{2\pi} \frac{\cos(y)(r'(x-y)(b+r(x))-(b+r(x-y))r'(x))}{\left(2-2\cos(y)\right)^{\al/2}} \nonumber \\
& \times 2\left(\frac{(b+r(x))h(x)+(b+r(x-y))h(x-y)-((b+r(x))h(x-y)+(b+r(x-y))h(x))\cos(y)}{2-2\cos(y)}\right) \nonumber \\
& \times \left(\frac{2-2\cos(y)}{(b+r(x))^2+(b+r(x-y))^2-2(b+r(x))(b+r(x-y))\cos(y)}\right)^{\al/2+1}dy \nonumber \\
& -c_{\al} \int_{0}^{2\pi} \frac{\sin(y)((b+r(x))h(x-y) + h(x)(b+r(x-y)) + r'(x)h'(x-y)+h'(x)r'(x-y)}{\left(2-2\cos(y)\right)^{\al/2}}\nonumber \\
& \times \left(\frac{2-2\cos(y)}{(b+r(x))^2+(b+r(x-y))^2-2(b+r(x))(b+r(x-y))\cos(y)}\right)^{\al/2}dy \nonumber \\
& + \left(\frac{\al}{2}\right)c_{\al} \int_{0}^{2\pi} \frac{\sin(y)((b+r(x-y))(b+r(x))+r'(x-y)r'(x))}{\left(2-2\cos(y)\right)^{\al/2}} \nonumber \\
& \times 2\left(\frac{(b+r(x))h(x)+(b+r(x-y))h(x-y)-((b+r(x))h(x-y)+(b+r(x-y))h(x))\cos(y)}{2-2\cos(y)}\right) \nonumber \\
& \times \left(\frac{2-2\cos(y)}{(b+r(x))^2+(b+r(x-y))^2-2(b+r(x))(b+r(x-y))\cos(y)}\right)^{\al/2+1}dy \nonumber \\
& - c_{\al} \int_{0}^{2\pi} \frac{\cos(y)((1+R(x-y))h'(x)-R'(x-y)h(x))}{\left((b+r(x))^2+(1+R(x-y))^2-2(b+r(x))(1+R(x-y))\cos(y)\right)^{\al/2}}dy \nonumber \\ 
& + \left(\frac{\al}{2}\right)c_{\al} \int_{0}^{2\pi} \left(\cos(y)((1+R(x-y))r'(x)-R'(x-y)(b+r(x)))\right)\nonumber \\
& \times 2\left(\frac{(b+r(x))h(x)-(1+R(x-y))h(x)\cos(y)}{\left((b+r(x))^2+(1+R(x-y))^2-2(b+r(x))(1+R(x-y))\cos(y)\right)^{\al/2+1}}\right)dy \nonumber \\
& +c_{\al} \int_{0}^{2\pi} \frac{\sin(y)(h(x)(1+R(x-y)) + h'(x)R'(x-y))}{\left((b+r(x))^2+(1+R(x-y))^2-2(b+r(x))(1+R(x-y))\cos(y)\right)^{\al/2}}dy\nonumber \\
& - \left(\frac{\al}{2}\right)c_{\al} \int_{0}^{2\pi} \left(\sin(y)((1+R(x-y))(b+r(x))+R'(x-y)r'(x))\right) \nonumber  \\
& \times 2\left(\frac{(b+r(x))h(x)-(1+R(x-y))h(x)\cos(y)}{\left((b+r(x))^2+(1+R(x-y))^2-2(b+r(x))(1+R(x-y))\cos(y)\right)^{\al/2+1}}\right)dy \label{derivadalarga}
\end{align} 

Moreover, these functions are continuous in $(R,r)$.
\end{lemma}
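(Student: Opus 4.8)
The plan is to obtain the four derivatives by differentiating the integrands in \eqref{derivadalarga} under the integral sign and then justifying the interchange of $\partial_t|_{t=0}$ and $\int_0^{2\pi}dy$. Concretely, for a direction $(H,h)$ with $\|(H,h)\|_X=1$ one writes the scalar difference quotient $g_x(t):=t^{-1}\big(F^1(b,R+tH,r+th)(x)-F^1(b,R,r)(x)\big)$, expands each numerator (which is polynomial in the values of $u,u',u(x-y),u'(x-y)$) and each denominator (an $\alpha/2$-power of a strictly positive quantity) by the chain rule, and identifies the pointwise limit. The terms carrying the explicit factor $\frac{\alpha}{2}$ in \eqref{derivadalarga} are exactly those produced by differentiating the denominators; the remaining terms come from the numerators. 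The same computation applied to $F^2$ gives $D_RF^2$, $D_rF^2$.

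The first substantive step is to check that these formal expressions are well-defined in $Y^{k-1}_c\times Y^{k-1}_c$ and continue analytically to the strip $|\Im(z)|\le c$. As in the Remark preceding Step 1, each kernel is written as a singular factor $(2-2\cos y)^{-\alpha/2}\sim|y|^{-\alpha}$ times a factor that, for $(R,r)\in V^r$ with $\varepsilon$ small, is smooth, bounded, and bounded below by a positive constant uniformly in $x$ in the strip; the same holds for the ``off-diagonal'' integrals whose denominators involve $1+R$ and $b+r$ evaluated at different points, and there the denominator never degenerates, so those pieces lose no derivatives. For the ``diagonal'' pieces one uses the antisymmetric structure of the numerators — combinations such as $H'(x-y)(1+R(x))-(1+R(x-y))H'(x)$ vanish to first order at $y=0$, and $\sin y\sim y$ — to gain one power of $y$. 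This renders the integrals absolutely convergent for $\alpha<1$, logarithmically controlled (absorbed by the $X^{k+\log}$ scale) for $\alpha=1$, and of fractional-derivative type for $1<\alpha<2$, landing in $X^{k+\alpha-1}$ and hence, after the derivative bookkeeping, in $Y^{k-1}_c$. This is precisely the mapping property of Step 1 and is treated, up to routine modifications, in \cite{Renault:relative-equlibria-holes-sqg,delaHoz-Hassainia-Hmidi:doubly-connected-vstates-gsqg,Castro-Cordoba-GomezSerrano:analytic-vstates-ellipses}, so I would invoke it rather than reprove it.

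The second step is the interchange of derivative and integral. For $x$ real or in the strip and $y\ne 0$, the integrand is $C^1$ in $t$ near $0$, and by the fundamental theorem of calculus $g_x(t)=\int_0^1 (\text{integrand}_y)'(st)\,ds$; using $\|(H,h)\|_X=1$ together with the algebra and composition properties of $X^k_c$ for $k\ge3$ (which control $\|H\|_{C^2}$, $\|h\|_{C^2}$, etc., in the strip), the $t$-derivative of the integrand is dominated, uniformly for $s\in[0,1]$ and $|t|$ small, by an integrable-in-$y$ function of the same singular type as above. Dominated convergence in $L^2(x\pm ic)$, together with the same argument applied to the first $k-1$ tangential derivatives, then gives convergence of $g_x(t)$ in $Y^{k-1}_c$ to the expression in \eqref{derivadalarga}, establishing existence of $D_RF^i$ and $D_rF^i$. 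The mixed derivatives $F_{bR}$ and $F_{br}$ follow by differentiating the now-explicit formulas once more in $b$, a parameter appearing only inside strictly positive denominators, so no new singularity arises and the identical domination argument applies. Continuity in $(R,r)$ is obtained the same way: each term in \eqref{derivadalarga} depends on $(R,r)$ only through pointwise evaluations of $R,R',r,r'$ and through nondegenerate denominators, so along a sequence $(R_n,r_n)\to(R,r)$ in $X$ the integrands converge pointwise with a uniform integrable domination in the strip, whence $D_{R}F^i(b,R_n,r_n)[H]\to D_{R}F^i(b,R,r)[H]$ in $Y^{k-1}_c$ uniformly over $\|H\|\le1$, and similarly for $D_r$ and for the mixed derivatives; this gives continuity as maps into the bounded operators.

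The main obstacle is bookkeeping rather than any single deep estimate: there are many terms, and for $1<\alpha<2$ the diagonal integrals genuinely behave like fractional derivatives of order $\alpha-1$, so one must keep careful track of which product of factors supplies the cancellation $y\mapsto O(y)$ needed to land in the space claimed in the statement while preserving analyticity in the strip — matching the exact derivative counts and norms to the $X^{k}$, $X^{k+\log}$, $X^{k+\alpha-1}$ dichotomy is where the care is required.
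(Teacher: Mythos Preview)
Your proposal is correct and follows the same route as the paper, which disposes of the formulas with ``Straightforward computation'' and then cites \cite{Renault:relative-equlibria-holes-sqg,delaHoz-Hassainia-Hmidi:doubly-connected-vstates-gsqg} for continuity, exactly as you do; your dominated-convergence justification is more detailed than what the paper writes but amounts to the same thing.

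One point deserves tightening. Your sentence on the mixed $b$-derivatives --- ``a parameter appearing only inside strictly positive denominators, so no new singularity arises'' --- is not accurate as written. In the diagonal pieces of $D_rF^2$ (those coming from $T_1(b+r)$), $b$ sits inside the denominator
\[
(b+r(x))^2+(b+r(x-y))^2-2(b+r(x))(b+r(x-y))\cos y,
\]
which vanishes like $y^2$ at $y=0$, and $b$ also appears in the numerators. The paper isolates precisely this term as ``the only problematic'' one: in the factored form the $b$-dependence lives in the ratio $\bigl((2-2\cos y)/\text{denom}\bigr)^{\alpha/2}$, which is uniformly positive and smooth, so $\partial_b$ produces two pieces $A_1,A_2$ with the same $y$-singularity profile as the original and the same cancellation $h'(x-y)-h'(x)$ in $A_1$. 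That is the correct reason no new singularity arises; once you say it this way your argument goes through unchanged.
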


\begin{proof}
Straightforward computation.

The continuity of $\pa_{r}F(b,R,r)$ and $\pa_{R} F(b,R,r)$ was done in \cite{Renault:relative-equlibria-holes-sqg} for $\al = 1$, and in \cite{delaHoz-Hassainia-Hmidi:doubly-connected-vstates-gsqg} for $\al < 1$ for H\"older-based spaces but it can easily be extended to the case $\al > 1$ and Sobolev-based spaces using the same techniques.

We explain now how to deal with derivatives with respect to $b$. The only problematic terms are the ones that contain a factor such as the one below in brackets (the first term in $\pa_{r}F^{2}(b,R,r)[h]$):

\begin{align*}
A(b,x) & =-c_{\al} \int_{0}^{2\pi} \frac{\cos(y)(h'(x-y)(b+r(x))+r'(x-y)h(x)-h(x-y)r'(x)-(b+r(x-y))h'(x))}{\left(2-2\cos(y)\right)^{\al/2}} \nonumber \\
& \times \left(\frac{2-2\cos(y)}{(b+r(x))^2+(b+r(x-y))^2-2(b+r(x))(b+r(x-y))\cos(y)}\right)^{\al/2}dy
\end{align*}

Taking a derivative in $b$:

\begin{align*}
\pa_{b} A(b,x) & =-c_{\al} \int_{0}^{2\pi} \frac{\cos(y)(h'(x-y)-h'(x))}{\left(2-2\cos(y)\right)^{\al/2}} \left(\frac{2-2\cos(y)}{(b+r(x))^2+(b+r(x-y))^2-2(b+r(x))(b+r(x-y))\cos(y)}\right)^{\al/2}dy \\
& +c_{\al}\left(\frac{\al}{2}\right) \int_{0}^{2\pi} \frac{\cos(y)(h'(x-y)(b+r(x))+r'(x-y)h(x)-h(x-y)r'(x)-(b+r(x-y))h'(x))}{\left(2-2\cos(y)\right)^{\al/2}} \nonumber \\
& \times \left(\frac{2-2\cos(y)}{(b+r(x))^2+(b+r(x-y))^2-2(b+r(x))(b+r(x-y))\cos(y)}\right)^{\al/2+1} \\
& \times \left(\frac{(2-2\cos(y))^{2}(b+r(x)+b+r(x-y))}{((b+r(x))^2+(b+r(x-y))^2-2(b+r(x))(b+r(x-y))\cos(y))^{2}}\right)
dy \\
& = A_{1}(b,x) + A_{2}(b,x),
\end{align*}

and both terms can be shown to be bounded and continuous as in the cases of $\pa_{r}F(b,R,r)$ or $\pa_{R}F(b,R,r)$.
\end{proof}

\subsection{Step 4}

\subsubsection{Calculation of $\mathcal{F}$}

Before proving Step 4, we compute the linearization of $F$ around $(0,0)$ in the direction $(h(x),H(x))$. Note that this is also obtainable from the computation in \cite{delaHoz-Hassainia-Hmidi:doubly-connected-vstates-gsqg} by setting $\Omega = 0$. 

\begin{prop}
Let $\displaystyle h(x) = \sum_{n}a_n \cos(nx), H(x) = \sum_{n}A_n\cos(nx)$, then we have that:

\begin{align*}
DF(b,0,0)[H,h] = \left(\begin{array}{c}U(x) \\ u(x) \end{array}\right),
\end{align*}

where

\begin{align*}
u(x) = \sum_{n} c_n \sin(nx), \quad U(x) = \sum_{n}U_n \sin(nx),
\end{align*}

and the coefficients satisfy, for any $n$:

\begin{align*}
(-n) M_n^{\al}(b)
\left(\begin{array}{c}A_n \\ a_n \end{array}\right)
= 
(-n) \left(
\begin{array}{cc}
 -\Theta_n + b^{2} \Lambda_1(b) & - b^{2}\Lambda_{n}(b) \\
b \Lambda_n(b) & b^{1-\al}\Theta_n - b \Lambda_1(b)
\end{array}
\right)
\left(\begin{array}{c}A_n \\ a_n \end{array}\right) = \left(\begin{array}{c}U_n \\ u_n \end{array}\right)
\end{align*}

with

\begin{align}
 \Lambda_n(b) & \equiv \frac{1}{b} \int_{0}^{\infty} \frac{1}{t^{1-\al}}J_n(bt)J_n(t) dt \nonumber \\
& = \frac{\Gamma\left(\frac{\al}{2}\right)}{\Gamma\left(1-\frac{\al}{2}\right)2^{1-\al}} \frac{\left(\frac{\al}{2}\right)_{n}}{n!}b^{n-1} F\left(\frac{\al}{2}, n+\frac{\al}{2},n+1,b^{2}\right), \nonumber \\
& = \frac{b^{n-1}}{2^{1-\al}\Gamma\left(1-\frac{\al}{2}\right)^{2}} \int_{0}^{1} x^{n-1+\frac{\al}{2}}(1-x)^{-\frac{\al}{2}}(1-b^{2}x)^{-\frac{\al}{2}}dx. \label{defilambdan} \\
\Theta_{n} & \equiv \Lambda_{1}(1) - \Lambda_{n}(1) \nonumber
\end{align}

\end{prop}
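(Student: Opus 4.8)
The statement to be established is the explicit formula for the linearized operator $DF(b,0,0)$ acting on trigonometric polynomials, together with the identification of its matrix symbol $M_n^{\al}(b)$ and the closed forms for the coefficients $\Lambda_n(b)$ and $\Theta_n$. The natural strategy is a direct linearization of the four building blocks $T_1(1+R)$, $T_2(b+r,1+R)$, $T_2(1+R,b+r)$ and $T_1(b+r)$ around the trivial annulus $R=r=0$, followed by the evaluation of the resulting Fourier multipliers via Bessel-function integral identities. I would organize it into three stages: (i) reduce each $T_i$ to its first-order term in $(H,h)$; (ii) diagonalize the resulting convolution-type operators in the $\cos(nx)/\sin(nx)$ basis, which turns them into scalar multiplications by integrals of the form $\int_0^{2\pi} (\dots) \cos(y)\,(2-2\cos y)^{-\al/2}\,dy$ and variants; (iii) recognize these integrals as the quantities $\Lambda_n(b)$, $\Theta_n$, $\Lambda_1(b)$ through the standard Weber--Schafheitlin / Gegenbauer addition formula, yielding the two equivalent hypergeometric and Euler-integral representations in \eqref{defilambdan}.

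\textbf{Stage (i): linearization.} Since $F(b,0,0)=0$ (Step 2), and each $T_i$ is a smooth function of $(R,r)$ on the relevant neighbourhood (the denominators are bounded below by the Remark), I would differentiate the integrands term by term. The "diagonal" pieces $T_1(1+R)$ and $T_1(b+r)$ have a weakly singular kernel $(2-2\cos y)^{-\al/2}$ extracted in the rewritten form given in Section~\ref{sectionproof}; differentiating at $R=0$ (resp. $r=0$) gives a sum of a transport-type term (coming from the numerator $H'(x-y)-H'(x)$ etc.) and a term from differentiating the ratio factor, which is regular at $R=r=0$ because that factor equals $1$ there and its derivative is a bounded even kernel. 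The "cross" pieces $T_2(b+r,1+R)$ and $T_2(1+R,b+r)$ have a genuinely smooth kernel (the two curves are separated, $0<b<1$), so their linearization is routine. Setting this up carefully, one reads off that $D_R F^1$ and $D_r F^2$ contribute the "self-interaction" multipliers built from $(2-2\cos y)$, while the off-diagonal entries come from $D_r F^1$ and $D_R F^2$, i.e. from differentiating a $T_2$ in its non-base argument.

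\textbf{Stage (ii)--(iii): computing the symbols.} Plugging $h(x)=\sum a_n\cos(nx)$, $H(x)=\sum A_n\cos(nx)$ into the linearized expressions and using that convolution with an even kernel in $y$ sends $\cos(nx)$ to a multiple of $\sin(nx)$ (the extra $\sin y$ / $\cos y$ weights and the parity of the kernel conspire to produce precisely a sine output, consistent with the target space $Y^{k-1}_c\times Y^{k-1}_c$), each entry reduces to a one-dimensional integral in $y$. For the self-interaction entries one gets integrals of the type $\int_0^{2\pi}(1-\cos(ny))(1-\cos y)^{-\al/2}\,dy$ up to normalization, which after the substitution to Bessel integrals is $\Lambda_1(1)-\Lambda_n(1)=\Theta_n$; for the cross entries one gets $\int_0^{2\pi}\cos(ny)\,(1+b^2-2b\cos y)^{-\al/2}\,dy$-type integrals, which are exactly $b$ (or $b^2$, $b^{1-\al}$) times $\Lambda_n(b)$. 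Here I would invoke the Bessel representation $\Lambda_n(b)=\frac1b\int_0^\infty t^{\al-1}J_n(bt)J_n(t)\,dt$ — obtained by writing $(\text{distance})^{-\al}$ via the formula $|z|^{-\al}=c\int_0^\infty t^{\al-1}J_0(t|z|)\,dt$ and expanding $J_0$ of the chord length by Graf's addition theorem — and then the Weber--Schafheitlin integral to pass to ${}_2F_1\!\left(\tfrac{\al}{2},n+\tfrac{\al}{2};n+1;b^2\right)$, and finally the Euler integral representation of ${}_2F_1$ for the third form. Assembling the four entries and factoring out the common $(-n)$ (which is the Fourier symbol of the tangential derivative $\pa_x^\perp Z\cdot(\cdot)$ hitting $\sin(nx)$) produces the claimed matrix $M_n^{\al}(b)$.

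\textbf{Main obstacle.} The routine-but-delicate part is Stage (i): correctly tracking every term that survives differentiation of the singular self-interaction kernels and checking that the apparently singular contributions (those with $(2-2\cos y)^{-\al/2}$ against a numerator that does \emph{not} vanish to the right order) actually cancel in pairs, so that the linearization lands in the stated function space and the symbol integrals converge for all $\al\in(0,2)$. Equivalently, one must verify that after extracting $(-n)$ the remaining integrals are the convergent quantities $\Lambda_n,\Theta_n$ rather than divergent ones — this is where the structure "numerator $\sim (1-\cos y)$ near $y=0$" must be used. The Bessel/hypergeometric bookkeeping in Stage (iii) is standard and I would relegate the identities to the Appendices; the genuinely load-bearing step is the bookkeeping of the linearization and the parity argument that sends cosines to sines.
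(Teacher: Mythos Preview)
Your proposal is correct and follows essentially the same route as the paper: set $R=r=0$ in the Gateaux derivatives, reduce to Fourier multipliers by testing on $\cos(nx)$, and identify the resulting one-dimensional integrals with $\Theta_n$ and $\Lambda_n(b)$ via the hypergeometric/Bessel identities recorded in the Appendices. The only tactical shortcuts the paper takes that you do not mention explicitly are an integration by parts that collapses each off-diagonal block $D_rF^1$, $D_RF^2$ to a single integral of the form $\int_0^{2\pi}\sin(y)\sin(ny)\,(1+b^2-2b\cos y)^{-\al/2-1}\,dy$ (handled by Lemma~\ref{lemmaA2}), and a citation to \cite{Castro-Cordoba-GomezSerrano:existence-regularity-vstates-gsqg,Hassainia-Hmidi:v-states-generalized-sqg} for the fact that the three self-interaction terms in $D_RF^1$ (resp.\ $D_rF^2$) sum to $n\Theta_n$ (resp.\ $-n b^{1-\al}\Theta_n$).
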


\begin{proof}

We first start by setting $r = R = 0$ in \eqref{derivadalarga}, yielding:

\begin{align*}
D_{R}F^{1}(b,0,0)[H] & = c_{\al} \int_{0}^{2\pi} \frac{\cos(y)(H'(x-y)-H'(x))}{\left(2-2\cos(y)\right)^{\al/2}}dy +c_{\al} \int_{0}^{2\pi} \frac{\sin(y)(H(x-y) + H(x))}{\left(2-2\cos(y)\right)^{\al/2}} dy\\
& - \left(\frac{\al}{2}\right)c_{\al} \int_{0}^{2\pi} \frac{\sin(y)(H(x)+H(x-y))}{\left(2-2\cos(y)\right)^{\al/2}}dy
 +c_{\al} \int_{0}^{2\pi} \frac{\cos(y)(bH'(x))}{\left(1+b^2 - 2b\cos(y)\right)^{\al/2}}dy \\
& -c_{\al} \int_{0}^{2\pi} \frac{\sin(y)(H(x)b)}{\left(1+b^2-2b\cos(y)\right)^{\al/2}}dy
+ 2\left(\frac{\al}{2}\right)c_{\al} \int_{0}^{2\pi} \left(\sin(y)(b)\right)\left(\frac{H(x)(1-b\cos(y))}{\left(1+b^2-2b\cos(y)\right)^{\al/2+1}}\right)dy \\
& = c_{\al} \int_{0}^{2\pi} \frac{\cos(y)(H'(x-y)-H'(x))}{\left(2-2\cos(y)\right)^{\al/2}}dy +c_{\al} \int_{0}^{2\pi} \frac{\sin(y)(H(x-y) + H(x))}{\left(2-2\cos(y)\right)^{\al/2}} dy\\
& - \left(\frac{\al}{2}\right)c_{\al} \int_{0}^{2\pi} \frac{\sin(y)(H(x)+H(x-y))}{\left(2-2\cos(y)\right)^{\al/2}}dy
 +c_{\al} \int_{0}^{2\pi} \frac{\cos(y)(bH'(x))}{\left(1+b^2 - 2b\cos(y)\right)^{\al/2}}dy \\
D_{r}F^{1}(b,0,0)[h] & = 
 -c_{\al} \int_{0}^{2\pi} \frac{\cos(y)(h'(x-y))}{\left(1+b^2-2b\cos(y)\right)^{\al/2}}dy
-c_{\al} \int_{0}^{2\pi} \frac{\sin(y)(h(x-y))}{\left(1+b^2-2b\cos(y)\right)^{\al/2}}dy\\
& + 2\left(\frac{\al}{2}\right)c_{\al} \int_{0}^{2\pi} \left(\sin(y)b\right)\left(\frac{h(x-y)(b-\cos(y))}{\left(1+b^2-2b\cos(y)\right)^{\al/2+1}}\right)dy \\
D_{R}F^{2}(b,0,0)[H] & = 
 c_{\al} \int_{0}^{2\pi} \frac{\cos(y)(H'(x-y)b)}{\left(1+b^2-2b\cos(y)\right)^{\al/2}}dy +c_{\al} \int_{0}^{2\pi} \frac{\sin(y)(bH(x-y))}{\left(1+b^2-2b\cos(y)\right)^{\al/2}}dy\\
& - 2\left(\frac{\al}{2}\right)c_{\al} \int_{0}^{2\pi} \left(\sin(y)(b)\right)\left(\frac{H(x-y)(1-b\cos(y))}{\left(1+b^2-2b\cos(y)\right)^{\al/2+1}}\right)dy \\
D_{r}F^{2}(b,0,0)[h] & = 
-c_{\al} \int_{0}^{2\pi} \frac{b\cos(y)(h'(x-y)-h'(x))}{\left(2-2\cos(y)\right)^{\al/2}} \frac{1}{b^{\al}} dy 
-c_{\al} \int_{0}^{2\pi} \frac{b\sin(y)(h(x-y) + h(x))}{\left(2-2\cos(y)\right)^{\al/2}} \frac{1}{b^{\al}}dy\\
& + \left(\frac{\al}{2}\right)c_{\al} \int_{0}^{2\pi} \frac{\sin(y)(b^2)b(h(x)+h(x-y))}{\left(2-2\cos(y)\right)^{\al/2}} \frac{1}{b^{\al+2}}dy - c_{\al} \int_{0}^{2\pi} \frac{\cos(y)(h'(x))}{\left(1+b^2-2b\cos(y)\right)^{\al/2}}dy \\ 
& +c_{\al} \int_{0}^{2\pi} \frac{\sin(y)(h(x))}{\left(1+b^2-2b\cos(y)\right)^{\al/2}}dy - 2\left(\frac{\al}{2}\right)c_{\al} \int_{0}^{2\pi} \left(\sin(y)b\right) \left(\frac{h(x)(b-\cos(y))}{\left(1+b^2-2b\cos(y)\right)^{\al/2+1}}\right)dy \\
& = -c_{\al} \int_{0}^{2\pi} \frac{b\cos(y)(h'(x-y)-h'(x))}{\left(2-2\cos(y)\right)^{\al/2}} \frac{1}{b^{\al}} dy 
-c_{\al} \int_{0}^{2\pi} \frac{b\sin(y)(h(x-y) + h(x))}{\left(2-2\cos(y)\right)^{\al/2}} \frac{1}{b^{\al}}dy\\
& + \left(\frac{\al}{2}\right)c_{\al} \int_{0}^{2\pi} \frac{\sin(y)(b^2)b(h(x)+h(x-y))}{\left(2-2\cos(y)\right)^{\al/2}} \frac{1}{b^{\al+2}}dy - c_{\al} \int_{0}^{2\pi} \frac{\cos(y)(h'(x))}{\left(1+b^2-2b\cos(y)\right)^{\al/2}}dy \\ 
\end{align*} 

We now integrate by parts and obtain:

\begin{align*}
D_{r}F^{1}(b,0,0)[h] & = 2b^2\left(\frac{\al}{2}\right)c_{\al} \int_{0}^{2\pi} \left(\frac{\sin(y)h(x-y)}{\left(1+b^2-2b\cos(y)\right)^{\al/2+1}}\right)dy \\
D_{R}F^{2}(b,0,0)[H] & = - 2b\left(\frac{\al}{2}\right)c_{\al} \int_{0}^{2\pi}\left(\frac{\sin(y)H(x-y)}{\left(1+b^2-2b\cos(y)\right)^{\al/2+1}}\right)dy \\
\end{align*}

By linearity, it suffices to do the calculations when $H(x) = A_n\cos(nx), h(x) = a_n \cos(nx)$. In that case:

\begin{align*}
D_{r}F^{1}(b,0,0)[h] & = 2a_nb^2\left(\frac{\al}{2}\right)c_{\al} \sin(nx) \int_{0}^{2\pi} \left(\frac{\sin(y)\sin(ny)}{\left(1+b^2-2b\cos(y)\right)^{\al/2+1}}\right)dy \\
D_{R}F^{2}(b,0,0)[H] & = - 2A_nb\left(\frac{\al}{2}\right)c_{\al}\sin(nx) \int_{0}^{2\pi}\left(\frac{\sin(y)\sin(ny)}{\left(1+b^2-2b\cos(y)\right)^{\al/2+1}}\right)dy \\
\end{align*}

Using Lemma \ref{lemmaA2}, this shows the off-diagonal entries of $M_n^{\al}(b)$.

We finally move on to the terms in $D_{R}F^{1}$ and $D_{r}F^{2}$. The sums of each of the first three terms were calculated before in \cite{Castro-Cordoba-GomezSerrano:existence-regularity-vstates-gsqg,Hassainia-Hmidi:v-states-generalized-sqg} and equal $n \Theta_{n}$ and $-nb^{1-\al}\Theta_{n}$ respectively. The fourth one can be calculated using Lemma \ref{lemaA1} with $m = 1$. This completes the proof of the Proposition.

\end{proof}

\subsubsection{One dimensionality of the Kernel of the linear operator.}

We will start computing a nontrivial element of the kernel of $DF[b,0,0]\left(\begin{array}{c} H \\ h \end{array}\right)$, where

\begin{align*}
H(x) = \sum_{n=1}^{\infty} A_n \cos(nx), \quad h(x) = \sum_{n=1}^{\infty} a_n \cos(nx).
\end{align*}

We have that

\begin{align*}
 DF[b,0,0]\left(\begin{array}{c} H \\ h \end{array}\right) = \sum_{n=1}^{\infty} (-n) M_{n}^{\al}(b) \left(\begin{array}{c} A_n \\ a_n\end{array}\right) \sin(nx),
\end{align*}

where $M_{n}^{\al}(b)$ and $\Lambda_{n}(b)$ were defined in \eqref{defilambdan}.

\begin{lemma}\label{lemajcreciente}
 Let $\al \in (0,2)$ and $n \geq 2$. Then:
\begin{align*}
 j(b) = \left(\frac{\Lambda_{n}(b)}{\Lambda_{1}(b)}\right)^{2}
\end{align*}

is a positive, increasing function of $b$.

\end{lemma}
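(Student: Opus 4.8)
\emph{Proof strategy.} The plan is to extract everything from the beta-type integral representation of $\Lambda_n$ in \eqref{defilambdan}. First I would note that positivity of $j$ is immediate: for $0<b<1$ the integrand $x^{n-1+\al/2}(1-x)^{-\al/2}(1-b^2x)^{-\al/2}$ is strictly positive on $(0,1)$ and integrable (near $x=0$ it is $\sim x^{\al/2}$ with $\al/2>-1$; near $x=1$ it is $\sim(1-x)^{-\al/2}$ with $\al/2<1$ since $\al<2$; the middle factor is bounded for $b<1$), so $\Lambda_n(b),\Lambda_1(b)>0$. Set $w_b(x)=x^{\al/2}(1-x)^{-\al/2}(1-b^2x)^{-\al/2}$ and let $\mu_b$ be the probability measure on $(0,1)$ with density proportional to $w_b$, writing $\mathbb{E}_{\mu_b}[g]=\int g\,d\mu_b$. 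Since the prefactor in \eqref{defilambdan} does not depend on $n$, it cancels in the quotient, and
\begin{align*}
\sqrt{j(b)}=\frac{\Lambda_n(b)}{\Lambda_1(b)}=b^{n-1}\,\frac{\int_0^1 x^{n-1}w_b(x)\,dx}{\int_0^1 w_b(x)\,dx}=b^{n-1}\,\mathbb{E}_{\mu_b}\!\left[x^{n-1}\right],
\end{align*}
so it suffices to show that $b\mapsto b^{n-1}\,\mathbb{E}_{\mu_b}[x^{n-1}]$ is increasing on $(0,1)$.

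The central step I would carry out is to show that $\mu_b$ is stochastically increasing in $b$. For $0<b_1<b_2<1$ the density ratio is $w_{b_2}(x)/w_{b_1}(x)=\big((1-b_1^2x)/(1-b_2^2x)\big)^{\al/2}$, and since $\tfrac{d}{dx}\big(\tfrac{1-b_1^2x}{1-b_2^2x}\big)=\tfrac{b_2^2-b_1^2}{(1-b_2^2x)^2}>0$ and $\al/2>0$, this ratio is a non-decreasing function of $x$. By the monotone-likelihood-ratio criterion, $\mu_{b_2}$ then first-order stochastically dominates $\mu_{b_1}$, so $\int g\,d\mu_{b_2}\ge\int g\,d\mu_{b_1}$ for every non-decreasing $g$; taking $g(x)=x^{n-1}$ shows $b\mapsto\mathbb{E}_{\mu_b}[x^{n-1}]$ is non-decreasing.

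To finish, I would combine the two monotone factors: for $n\ge2$ the map $b\mapsto b^{n-1}$ is strictly positive and strictly increasing while $b\mapsto\mathbb{E}_{\mu_b}[x^{n-1}]$ is strictly positive and non-decreasing, so the product $\Lambda_n(b)/\Lambda_1(b)$ is strictly increasing, and squaring a positive increasing function preserves monotonicity, which gives the claim. I do not expect a genuine obstacle — the only delicate points, integrability of $w_b$ and monotonicity of the likelihood ratio, are elementary. (If one prefers to avoid the probabilistic language, the same conclusion follows by differentiating $\log(\Lambda_n/\Lambda_1)=(n-1)\log b+\log I_n(b)-\log I_1(b)$ with $I_n(b)=\int_0^1 x^{n-1}w_b(x)\,dx$: using $\partial_b w_b=\al b\,\tfrac{x}{1-b^2x}w_b$ one gets $\tfrac{d}{db}\log(\Lambda_n/\Lambda_1)=\tfrac{n-1}{b}+\al b\big(\mathbb{E}_{\tilde\mu_{b,n}}[\tfrac{x}{1-b^2x}]-\mathbb{E}_{\mu_b}[\tfrac{x}{1-b^2x}]\big)$ with $\tilde\mu_{b,n}$ having density $\propto x^{n-1}w_b$; since $x/(1-b^2x)$ is increasing and $\tilde\mu_{b,n}$ stochastically dominates $\mu_b$, the bracket is $\ge0$ — a form of Chebyshev's integral inequality — and $\tfrac{n-1}{b}>0$.)
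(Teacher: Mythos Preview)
Your proof is correct. Both you and the paper start from the integral representation \eqref{defilambdan} and reduce the question to a correlation-type inequality, but the packaging differs. The paper computes the logarithmic derivative $\Lambda_n'/\Lambda_n-\Lambda_1'/\Lambda_1$ directly, cross-multiplies the resulting ratio of integrals into a double integral over $(0,1)^2$, and symmetrizes in $(x,y)$ to obtain the manifestly positive integrand $(x^{n-1}-y^{n-1})(x-y)$ against a positive weight; this is precisely the computation underlying Chebyshev's integral inequality, which you name explicitly in your parenthetical alternative. Your main argument instead avoids differentiation by observing that the family $\{\mu_b\}$ has monotone likelihood ratio in $b$, hence is stochastically increasing, so $\mathbb{E}_{\mu_b}[x^{n-1}]$ is non-decreasing and the strictly increasing factor $b^{n-1}$ does the rest. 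The MLR route is arguably cleaner and more conceptual; the paper's symmetrization is more hands-on but self-contained for a reader unfamiliar with stochastic ordering. Either way the content is the same inequality.
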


\begin{proof}
 Since $\frac{\Lambda_{n}(b)}{\Lambda_{1}(b)}$ is positive by Lemma \ref{lemmaA2}, it is enough to show that it is increasing. To do so, we will show that $\frac{\Lambda_{n}'(b)}{\Lambda_{n}(b)} - \frac{\Lambda_{1}'(b)}{\Lambda_{1}(b)} > 0$. Using the integral representation of $\Lambda_{n}(b)$:

\begin{align*}
 \Lambda_{n}(b) & = \frac{b^{n-1}}{2^{1-\al}\Gamma\left(1-\frac{\al}{2}\right)^{2}} \int_{0}^{1} x^{n-1+\frac{\al}{2}}(1-x)^{-\frac{\al}{2}}(1-b^{2}x)^{-\frac{\al}{2}}dx
\end{align*}

one obtains that

\begin{align*}
 \frac{\Lambda_{n}'(b)}{\Lambda_{n}(b)} & = \frac{n-1}{b} + \al b\frac{\int_{0}^{1} x^{n+\frac{\al}{2}}(1-x)^{-\frac{\al}{2}}(1-b^{2}x)^{-\frac{\al}{2}-1}dx}{\int_{0}^{1} x^{n-1+\frac{\al}{2}}(1-x)^{-\frac{\al}{2}}(1-b^{2}x)^{-\frac{\al}{2}}dx} \\
 \frac{\Lambda_{1}'(b)}{\Lambda_{1}(b)} & =  \al b\frac{\int_{0}^{1} x^{1+\frac{\al}{2}}(1-x)^{-\frac{\al}{2}}(1-b^{2}x)^{-\frac{\al}{2}-1}dx}{\int_{0}^{1} x^{\frac{\al}{2}}(1-x)^{-\frac{\al}{2}}(1-b^{2}x)^{-\frac{\al}{2}}dx} \\
\end{align*}

Thus, $\frac{\Lambda_{n}'(b)}{\Lambda_{n}(b)} - \frac{\Lambda_{1}'(b)}{\Lambda_{1}(b)} > 0$ iff

\begin{align*}
 \frac{\int_{0}^{1} x^{n+\frac{\al}{2}}(1-x)^{-\frac{\al}{2}}(1-b^{2}x)^{-\frac{\al}{2}-1}dx}{\int_{0}^{1} x^{n-1+\frac{\al}{2}}(1-x)^{-\frac{\al}{2}}(1-b^{2}x)^{-\frac{\al}{2}}dx}  > \frac{\int_{0}^{1} x^{1+\frac{\al}{2}}(1-x)^{-\frac{\al}{2}}(1-b^{2}x)^{-\frac{\al}{2}-1}dx}{\int_{0}^{1} x^{\frac{\al}{2}}(1-x)^{-\frac{\al}{2}}(1-b^{2}x)^{-\frac{\al}{2}}dx} \\
\Leftrightarrow
 \int_{0}^{1} \int_{0}^{1} x^{n+\frac{\al}{2}}(1-x)^{-\frac{\al}{2}}(1-b^{2}x)^{-\frac{\al}{2}-1}y^{\frac{\al}{2}}(1-y)^{-\frac{\al}{2}}(1-b^{2}y)^{-\frac{\al}{2}}dxdy  \\
  > \int_{0}^{1} \int_{0}^{1} y^{1+\frac{\al}{2}}(1-y)^{-\frac{\al}{2}}(1-b^{2}y)^{-\frac{\al}{2}-1} x^{n-1+\frac{\al}{2}}(1-x)^{-\frac{\al}{2}}(1-b^{2}x)^{-\frac{\al}{2}}dy \\
\Leftrightarrow
\int_{0}^{1} \int_{0}^{1} (1-x)^{-\frac{\al}{2}}(1-y)^{-\frac{\al}{2}}x^{\frac{\al}{2}}y^{\frac{\al}{2}}(1-b^{2}x)^{-\frac{\al}{2}-1}(1-b^{2}y)^{-\frac{\al}{2}-1}(x^{n}(1-b^{2}y) - yx^{n-1}(1-b^{2}x))dxdy  > 0\\
\Leftrightarrow
\int_{0}^{1} \int_{0}^{1} (1-x)^{-\frac{\al}{2}}(1-y)^{-\frac{\al}{2}}x^{\frac{\al}{2}}y^{\frac{\al}{2}}(1-b^{2}x)^{-\frac{\al}{2}-1}(1-b^{2}y)^{-\frac{\al}{2}-1}x^{n-1}(x-y)dx dy  > 0\\
\Leftrightarrow
\frac12 \int_{0}^{1} \int_{0}^{1} (1-x)^{-\frac{\al}{2}}(1-y)^{-\frac{\al}{2}}x^{\frac{\al}{2}}y^{\frac{\al}{2}}(1-b^{2}x)^{-\frac{\al}{2}-1}(1-b^{2}y)^{-\frac{\al}{2}-1}(x^{n-1}-y^{n-1})(x-y)dx dy  > 0,
\end{align*}

which is true since the integrand is positive.

\end{proof}

We can prove the following proposition:

\begin{prop}\label{propbstar}
Let $\Delta_{m}^{\al}(b)$ be 

\begin{align*}
 \Delta_{m}^{\al}(b) = \text{det}(M_{m}^{\al}(b)) = (-\Theta_{m} + b^{2}\Lambda_1(b))(b^{1-\al}\Theta_{m} - b\Lambda_{1}(b)) + b^3\Lambda_{m}(b)^{2}
\end{align*}

Then, for any $\al \in (0,2)$ and for any $m \geq 2$, there exists a unique $b_{m}^{*}$ such that $\Delta_{m}^{\al}(b_{m}^{*}) = 0$. We also have that rk$(M_{m}^{\al}(b_{m}^{*})) = 1$ for that value of $b_{m}^{*}$.

Moreover, for fixed $\al \in (0,2)$, the sequence $b_{m}^{*}$ is increasing in $m$.
\end{prop}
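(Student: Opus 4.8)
The statement has three parts: (i) existence of a root $b_m^*$ of $\Delta_m^\al(b)$ in $(0,1)$, (ii) uniqueness, and (iii) the rank claim $\mathrm{rk}(M_m^\al(b_m^*))=1$, plus monotonicity of $b_m^* $ in $m$. I would handle these in that order. First, for existence I would examine the sign of $\Delta_m^\al(b)$ at the two endpoints $b\to 0^+$ and $b\to 1^-$. As $b\to 0$, the terms $b^2\Lambda_1(b)$, $b\Lambda_1(b)$ and $b^3\Lambda_m(b)^2$ all vanish (recall $\Lambda_n(b)\sim c\,b^{n-1}$), so $\Delta_m^\al(b)\to (-\Theta_m)(b^{1-\al}\Theta_m)$, whose sign is governed by $-\Theta_m^2 b^{1-\al}<0$ (using $\Theta_m = \Lambda_1(1)-\Lambda_m(1)>0$ for $m\ge 2$, which follows from Lemma \ref{lemmaA2}/the monotonicity of $\Lambda_n(1)$ in $n$). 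At $b=1$, $\Lambda_1(1)-\Lambda_m(1)=\Theta_m$, so $-\Theta_m + b^2\Lambda_1(b)\big|_{b=1} = -\Theta_m+\Lambda_1(1) = \Lambda_m(1)>0$ and $b^{1-\al}\Theta_m - b\Lambda_1(b)\big|_{b=1} = \Theta_m - \Lambda_1(1) = -\Lambda_m(1)<0$, giving a negative product, but then $\Delta_m^\al(1) = -\Lambda_m(1)^2 + \Lambda_m(1)^2 = 0$. So the root appears to sit exactly at the endpoint unless I rewrite things; I would instead divide $\Delta_m^\al(b)$ by a positive factor and track a cleaner auxiliary function $g_m(b)$ whose zero set coincides with that of $\Delta_m^\al$ on $(0,1)$, designed so that $g_m(0^+)$ and $g_m(1^-)$ have opposite (strict) signs, then apply the intermediate value theorem.

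For uniqueness, the natural tool is Lemma \ref{lemajcreciente}: the ratio $(\Lambda_m(b)/\Lambda_1(b))^2$ is strictly increasing in $b$. I would rearrange $\Delta_m^\al(b)=0$ into the form
\[
\left(\frac{\Lambda_m(b)}{\Lambda_1(b)}\right)^2 = \Phi_{m}^\al(b),
\]
where $\Phi_m^\al(b) = -\,b^{-3}\Lambda_1(b)^{-2}(-\Theta_m+b^2\Lambda_1(b))(b^{1-\al}\Theta_m-b\Lambda_1(b))$, and then argue that the right-hand side is \emph{monotone in the opposite direction} (decreasing, or at least that $\mathrm{LHS}-\mathrm{RHS}$ is strictly monotone). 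Since the left side is strictly increasing by Lemma \ref{lemajcreciente}, if the right side is non-increasing then the two curves cross at most once, yielding uniqueness. The cleanest version: show $\Delta_m^\al(b)/(b\Lambda_1(b)^2) $ (or a similar normalization) can be written as $\big[$increasing function$\big] - \big[$function involving only $\Theta_m$ and $b,\al, \Lambda_1\big]$ that is non-increasing, so that the whole expression is strictly increasing in $b$ — hence has at most one zero.

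For the rank statement, I would note that $M_m^\al(b_m^*)$ is a $2\times2$ matrix with zero determinant, so $\mathrm{rk}\le 1$; to get $\mathrm{rk}=1$ it suffices to exhibit one nonzero entry. The off-diagonal entries are $\pm b\Lambda_m(b)$ and $\mp b^2\Lambda_m(b)$, and $\Lambda_m(b_m^*)>0$ by Lemma \ref{lemmaA2} with $b_m^*\in(0,1)$, $b_m^*\neq 0$, so $M_m^\al(b_m^*)\neq 0$ and the rank is exactly $1$. Finally, for monotonicity of $b_m^*$ in $m$, I would use that $\Theta_m = \Lambda_1(1)-\Lambda_m(1)$ is increasing in $m$ (again from monotonicity of $\Lambda_n(1)$ in $n$, Lemma \ref{lemmaA2}) together with the structure of the uniqueness argument: writing the defining equation as $G(b,m) = 0$ with $G$ strictly monotone in $b$, I would show $\partial_m G$ (or the discrete difference $G(b,m+1)-G(b,m)$) has a definite sign, so by the implicit-function/monotone-comparison principle $b_m^*$ moves monotonically; the sign of that difference comes from comparing $\Theta_{m+1}>\Theta_m$ and $\Lambda_{m+1}(b)<\Lambda_m(b)$.

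\textbf{Main obstacle.} The delicate point is the endpoint analysis for existence: naively $\Delta_m^\al(1)=0$ and $\Delta_m^\al(0^+)=0$ as well (both off-diagonal and the first diagonal factor degenerate), so I cannot simply read off a sign change. The work is in choosing the right normalization/auxiliary function $g_m$ so that the rescaled quantity has genuinely strict opposite signs at (or near) the endpoints — likely this means examining the leading-order behavior as $b\to1^-$ (a first derivative computation, using $\Lambda_n'(b)$ as in Lemma \ref{lemajcreciente}) to see that $\Delta_m^\al$ approaches $0$ from one definite side, and similarly near $b=0$. Establishing that $\Phi_m^\al$ (the right-hand side in the uniqueness reformulation) is non-increasing — or more precisely that the difference $j(b)-\Phi_m^\al(b)$ is strictly monotone throughout $(0,1)$ — is the second place where a genuine estimate, rather than bookkeeping, is needed; the positivity trick used in Lemma \ref{lemajcreciente} (symmetrizing a double integral) is the model I would try to imitate.
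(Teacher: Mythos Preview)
Your treatment of the rank condition and the monotonicity-in-$m$ argument are fine and essentially match the paper. The existence sketch (endpoint analysis plus a normalization to avoid the degenerate value $\Delta_m^\al(1)=0$) is different from what the paper does but is not unreasonable in principle.

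The genuine gap is in your uniqueness strategy. You propose to rewrite $\Delta_m^\al(b)=0$ as $j(b)=\Phi_m^\al(b)$ with $j(b)=(\Lambda_m(b)/\Lambda_1(b))^2$, and then argue that $\Phi_m^\al$ is non-increasing (or that $j-\Phi_m^\al$ is strictly monotone), so that the curves meet at most once. This cannot work: the function $\Delta_m^\al$ vanishes at \emph{both} $b=b_m^*\in(0,1)$ and $b=1$ (you computed $\Delta_m^\al(1)=0$ yourself), so $j-\Phi_m^\al$ has at least two zeros in $(0,1]$ and therefore cannot be strictly monotone there. In fact, since $j(1)=\Phi_m^\al(1)$, $j(b_m^*)=\Phi_m^\al(b_m^*)$, and $j$ is increasing, one has $\Phi_m^\al(b_m^*)=j(b_m^*)<j(1)=\Phi_m^\al(1)$, so $\Phi_m^\al$ must increase somewhere on $(b_m^*,1)$; it is \emph{not} non-increasing. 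The underlying reason is that $\Delta_m^\al(b)=0$ is a quadratic in $\Theta_m$, so the single scalar equation $j=\Phi_m^\al$ mixes two distinct branches.

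The paper's key idea, which you are missing, is precisely to separate those branches: solve $\Delta_m^\al(b)=0$ for $\Theta_m$ to obtain $\Theta_m=Q_\pm(b,m)$, then prove (this is the lengthy part, using hypergeometric identities) that $Q_-(b,m)\le\Theta_m$ for all $b\in(0,1]$ with equality only at $b=1$. This forces any interior root to lie on the $Q_+$ branch. One then shows $Q_+(\cdot,m)$ is strictly increasing in $b$ (here Lemma~\ref{lemajcreciente} enters, applied to $\tilde Q_+=Q_+/(b^\al\Lambda_1(b))$), with $Q_+(0^+,m)=0<\Theta_m$ and $Q_+(1^-,m)>\Theta_m$, which yields both existence and uniqueness simultaneously and also makes the monotonicity-in-$m$ comparison immediate. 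Without isolating the $Q_+$ branch, your monotonicity route for uniqueness is obstructed by the spurious root at $b=1$.
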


\begin{proof}
 We first show the existence of $b_{m}^{*}$. Fix $\alpha$ and $m$. Expanding $\Delta_{m}^{\al}(b)$, we obtain:

\begin{align*}
 \Delta_{m}^{\al}(b) & = -b^{1-\al}\Theta_{m}^{2} + \Theta_{m}(b^{3-\al}\Lambda_1(b) + b \Lambda_1(b)) + b^{3}(\Lambda_{m}(b)^{2} - \Lambda_1(b)^{2}).
\end{align*}

If $b_m$ is a solution of $\Delta_m^{\al}(b_m) = 0$, then 

\begin{align}
\label{defiQ}
 \Theta_{m} & = \frac{1}{2b^{1-\al}}\left(\Lambda_1(b)(b + b^{3-\al}) \pm \sqrt{\Lambda_1(b)^{2}(b+b^{3-\al})^{2} - 4b^{4-\al}(\Lambda_1(b)^{2} - \Lambda_m(b)^{2})}\right) \equiv Q_{\pm}(b,m)
\end{align}

at $b = b_m$. We note that both $Q_{\pm}(b,m)$ are real since the discriminant is equal to $\Lambda_{1}(b)^{2}(b-b^{3-\al})^{2} + 4b^{4-\al} \Lambda_{m}(b)^{2} \geq 0$. This also implies $Q_{-}(b,m) \leq Q_{+}(b,m)$ for all $b,m$.

\begin{prop}

Let $m \geq 2$ and let $Q_{-}(b,m)$ be defined as in \eqref{defiQ}. We have that, for all $0 < b \leq 1$:

\begin{align*}
 Q_{-}(b,m) \leq \Theta_m,
\end{align*}

with equality only if $b = 1$. 

\end{prop}

\begin{proof}
We start with the following chain of inequalities:

\begin{align*}
 Q_{-}(b,m) & = \frac12 \frac{(b^{\al}+b^{2})^{2}\Lambda_1(b)^{2} - ((b^{\al}-b^{2})^{2}\Lambda_{1}(b)^{2} + 4b^{2+\al}\Lambda_{m}(b)^{2})}{\left(\Lambda_1(b)(b^{\al} + b^{2}) + \sqrt{\Lambda_1(b)^{2}(b^{\al}-b^{2})^{2} + 4b^{2+\al}\Lambda_m(b)^{2}}\right)} \\
& = \frac{2b^{2+\al}(\Lambda_{1}(b)^{2} - \Lambda_{m}(b)^{2})}{\left(\Lambda_1(b)(b^{\al} + b^{2}) + \sqrt{\Lambda_1(b)^{2}(b^{\al}-b^{2})^{2} + 4b^{2+\al}\Lambda_m(b)^{2}}\right)} \\
& = \frac{2b^{2}(\Lambda_{1}(b)^{2} - \Lambda_{m}(b)^{2})}{\left(\Lambda_1(b)(1 + b^{2-\al}) + \sqrt{\Lambda_1(b)^{2}(1-b^{2-\al})^{2} + 4b^{2-\al}\Lambda_m(b)^{2}}\right)} \\
& \leq \frac{2b^{2}(\Lambda_{1}(b)^{2} - \Lambda_{m}(b)^{2})}{\left(\Lambda_1(b)(1 + b^{2-\al}) + \sqrt{\Lambda_m(b)^{2}(1-b^{2-\al})^{2} + 4b^{2-\al}\Lambda_m(b)^{2}}\right)} \\
& = \frac{2b^{2}(\Lambda_{1}(b)^{2} - \Lambda_{m}(b)^{2})}{(\Lambda_1(b)+\Lambda_m(b))(1 + b^{2-\al})}
 = \frac{2b^{2}}{(1 + b^{2-\al})}(\Lambda_{1}(b) - \Lambda_{m}(b)) \leq  b^{1+\frac{\al}{2}}(\Lambda_{1}(b) - \Lambda_{m}(b)).
\end{align*}

We claim that

\begin{align*}
  b^{1+\frac{\al}{2}}(\Lambda_{1}(b) - \Lambda_{m}(b)) \leq (\Lambda_{1}(1) - \Lambda_{m}(1)) = \Theta_{m}.
\end{align*}

In order to prove it, we will show that the LHS is an increasing function of $b$. This is enough since both LHS and RHS agree at $b = 1$. Taking a derivative, we obtain:

\begin{align*}
 b^{\frac{\al}{2}}\left(\left(1+\frac{\al}{2}\right)(\Lambda_{1}(b) - \Lambda_{m}(b)) + b(\Lambda_{1}'(b) - \Lambda_{m}'(b))\right),
\end{align*}

which is positive if and only if

\begin{align*}
 b\Lambda_{1}'(b) + \left(1+\frac{\al}{2}\right)\Lambda_{1}(b) > b\Lambda_{m}'(b) + \left(1+\frac{\al}{2}\right)\Lambda_{m}(b).
\end{align*}

% BEGIN IMPROVED VERSION

We now show the following identity:

\begin{lemma}
 Let $m \geq 1$. Then

\begin{align*}
 & b\Lambda_{m}'(b) + \left(1+\frac{\al}{2}\right)\Lambda_{m}(b) -\left( b\Lambda_{m+1}'(b) + \left(1+\frac{\al}{2}\right)\Lambda_{m+1}(b) \right) \\
& = \left(\frac{\left(\frac{\al}{2}\right)_{m+1}\Gamma\left(\frac{\al}{2}\right)}{m!2^{1-\al}\Gamma\left(1-\frac{\al}{2}\right)}b^{m-2}(1-b)\right)\left(\hypg\left(\frac{\al}{2},m+\frac{\al}{2},m+1,b^{2}\right) + (b-1)\hypg\left(\frac{\al}{2},m+1+\frac{\al}{2},m+1,b^{2}\right)\right)
\end{align*}

\end{lemma}

\begin{proof}
We first start with the following identity. For every $m \geq 1$:

\begin{align*}
 b\Lambda_{m}'(b) + \left(1+\frac{\al}{2}\right)\Lambda_{m}(b) & = \left(\frac{\Gamma\left(\frac{\al}{2}\right)}{2^{1-\al}\Gamma\left(1-\frac{\al}{2}\right)}\right)\frac{\left(\frac{\al}{2}\right)_{m}}{m!}(m-1)b^{m-1}\hypg\left(\frac{\al}{2},m+\frac{\al}{2},m+1,b^{2}\right) \\
& + \left(\frac{\Gamma\left(\frac{\al}{2}\right)}{2^{1-\al}\Gamma\left(1-\frac{\al}{2}\right)}\right)\frac{\left(\frac{\al}{2}\right)_{m}}{m!}2b^{m+1}\frac{\left(\frac{\al}{2}\right)\left(m+\frac{\al}{2}\right)}{m+1}\hypg\left(\frac{\al}{2}+1,m+1+\frac{\al}{2},m+2,b^{2}\right) \\
& + \left(\frac{\Gamma\left(\frac{\al}{2}\right)}{2^{1-\al}\Gamma\left(1-\frac{\al}{2}\right)}\right)\frac{\left(\frac{\al}{2}\right)_{m}}{m!}\left(1+\frac{\al}{2}\right)b^{m-1}\hypg\left(\frac{\al}{2},m+\frac{\al}{2},m+1,b^{2}\right) \\
& = \left(\frac{\Gamma\left(\frac{\al}{2}\right)}{2^{1-\al}\Gamma\left(1-\frac{\al}{2}\right)}\right)\frac{\left(\frac{\al}{2}\right)_{m}}{m!}\left(m+\frac{\al}{2}\right)b^{m-1}\hypg\left(\frac{\al}{2},m+\frac{\al}{2},m+1,b^{2}\right) \\
& + \left(\frac{\Gamma\left(\frac{\al}{2}\right)}{2^{1-\al}\Gamma\left(1-\frac{\al}{2}\right)}\right)\frac{\left(\frac{\al}{2}\right)_{m}}{m!}\left(m+\frac{\al}{2}\right)b^{m+1}\frac{\al}{m+1}\hypg\left(\frac{\al}{2}+1,m+1+\frac{\al}{2},m+2,b^{2}\right) \\
& = \left(\frac{\Gamma\left(\frac{\al}{2}\right)}{2^{1-\al}\Gamma\left(1-\frac{\al}{2}\right)}\right)\frac{\left(\frac{\al}{2}\right)_{m+1}}{m!}b^{m-1}\hypg\left(\frac{\al}{2},m+\frac{\al}{2},m+1,b^{2}\right) \\
& + \left(\frac{\Gamma\left(\frac{\al}{2}\right)}{2^{1-\al}\Gamma\left(1-\frac{\al}{2}\right)}\right)\frac{\left(\frac{\al}{2}\right)_{m+1}}{m!}b^{m+1}\frac{\al}{m+1}\hypg\left(\frac{\al}{2}+1,m+1+\frac{\al}{2},m+2,b^{2}\right) \\
\end{align*}

where we have used the expression \eqref{derivadahiper} for the derivative of the hypergeometric function. Using \eqref{hypergeometricidentity1}, we get

\begin{align*}
 b^{2}\frac{\al}{m+1}\hypg\left(\frac{\al}{2}+1,m+1+\frac{\al}{2},m+2,b^{2}\right)
& = 2\left(\hypg\left(\frac{\al}{2},m+1+\frac{\al}{2},m+1,b^{2}\right) - \hypg\left(\frac{\al}{2},m+\frac{\al}{2},m+1,b^{2}\right)\right)
\end{align*}

which implies that

\begin{align}
 b\Lambda_{m}'(b) + \left(1+\frac{\al}{2}\right)\Lambda_{m}(b) & =
\left(\frac{\Gamma\left(\frac{\al}{2}\right)}{2^{1-\al}\Gamma\left(1-\frac{\al}{2}\right)}\right) \frac{\left(\frac{\al}{2}\right)_{m+1}}{m!}b^{m-1} \nonumber \\
& \times \left(2\hypg\left(\frac{\al}{2},m+1+\frac{\al}{2},m+1,b^{2}\right)\right. \left.-\hypg\left(\frac{\al}{2},m+\frac{\al}{2},m+1,b^{2}\right)\right) \label{simplificacionblambda}
\end{align}

We now deal with the term $b\Lambda_{m+1}'(b) + \left(1+\frac{\al}{2}\right)\Lambda_{m+1}(b)$. By \eqref{simplificacionblambda}, we have that

\begin{align*}
& b\Lambda_{m+1}'(b) + \left(1+\frac{\al}{2}\right)\Lambda_{m+1}(b) \\
& =
\left(\frac{\Gamma\left(\frac{\al}{2}\right)}{2^{1-\al}\Gamma\left(1-\frac{\al}{2}\right)}\right)\frac{\left(\frac{\al}{2}\right)_{m+2}}{(m+1)!}b^{m}\left(2\hypg\left(\frac{\al}{2},m+2+\frac{\al}{2},m+2,b^{2}\right)-\hypg\left(\frac{\al}{2},m+1+\frac{\al}{2},m+2,b^{2}\right)\right) \\
& =
\left(\frac{\Gamma\left(\frac{\al}{2}\right)}{2^{1-\al}\Gamma\left(1-\frac{\al}{2}\right)}\frac{\left(\frac{\al}{2}\right)_{m+1}b^{m-1}}{m!}\right)\frac{\left(\frac{\al}{2}+m+1\right)}{(m+1)}b\left(2\hypg\left(\frac{\al}{2},m+2+\frac{\al}{2},m+2,b^{2}\right)-\hypg\left(\frac{\al}{2},m+1+\frac{\al}{2},m+2,b^{2}\right)\right) \\
\end{align*}

By \eqref{hypergeometricidentity2}, 

\begin{align*}
2\hypg\left(\frac{\al}{2},m+2+\frac{\al}{2},m+2,b^{2}\right) = \frac{2}{m+1+\frac{\al}{2}}\left((m+1)\hypg\left(\frac{\al}{2},m+1+\frac{\al}{2},m+1,b^2\right) + \frac{\al}{2}\hypg\left(\frac{\al}{2},m+1+\frac{\al}{2},m+2,b^2\right)\right),
\end{align*}

which implies

\begin{align*}
 & b\Lambda_{m+1}'(b) + \left(1+\frac{\al}{2}\right)\Lambda_{m+1}(b) \\
& =
\left(\frac{\Gamma\left(\frac{\al}{2}\right)\left(\frac{\al}{2}\right)_{m+1}b^{m-1}}{2^{1-\al}\Gamma\left(1-\frac{\al}{2}\right)m!}\right)\frac{b}{(m+1)}\left(2(m+1)\hypg\left(\frac{\al}{2},m+1+\frac{\al}{2},m+1,b^{2}\right)+\left(m+1-\frac{\al}{2}\right)\hypg\left(\frac{\al}{2},m+1+\frac{\al}{2},m+2,b^{2}\right)\right) \\
\end{align*}

Furthermore, by \eqref{hypergeometricidentity3},

\begin{align*}
 \left(m+1-\frac{\al}{2}\right)b^{2}\hypg\left(\frac{\al}{2},m+1+\frac{\al}{2},m+2,b^2\right)
= (b^{2}-1)(m+1)\hypg\left(\frac{\al}{2},m+1+\frac{\al}{2},m+1,b^2\right) + (m+1)\hypg\left(\frac{\al}{2},m+\frac{\al}{2},m+1,b^2\right).
\end{align*}

Finally, putting everything together:

\begin{align*}
& b\Lambda_{m}'(b) + \left(1+\frac{\al}{2}\right)\Lambda_{m}(b) - \left(b\Lambda_{m+1}'(b) + \left(1+\frac{\al}{2}\right)\Lambda_{m+1}(b)\right) \\
& = \left(\frac{\Gamma\left(\frac{\al}{2}\right)}{2^{1-\al}\Gamma\left(1-\frac{\al}{2}\right)}\frac{\left(\frac{\al}{2}\right)_{m+1}b^{m-1}}{m!}\frac{(1-b)}{b}\right)\left(\hypg\left(\frac{\al}{2},m+\frac{\al}{2},m+1,b^2\right) + (b-1)\hypg\left(\frac{\al}{2},m+1+\frac{\al}{2},m+1,b^2\right)\right),
\end{align*}

as we wanted to prove.

\end{proof}

The first bracket is always positive, and, since $0 < b < 1$, the second bracket can be bounded below by

\begin{align*}
 \frac{1}{b}\left(\hypg\left(\frac{\al}{2},m+\frac{\al}{2},m+1,b^{2}\right)-\hypg\left(\frac{\al}{2},m+1+\frac{\al}{2},m+1,b^{2}\right)\right) + b\hypg\left(\frac{\al}{2},m+1+\frac{\al}{2},m+1,b^{2}\right)
\end{align*}

We will focus on this term. Expanding the hypergeometric functions, we get

\begin{align*}
 & \sum_{j=1}^{\infty}\frac{1}{j!}\left(\frac{\left(\frac{\al}{2}\right)_{j}\left(m+\frac{\al}{2}\right)_{j}}{(m+1)_{j}} - \frac{\left(\frac{\al}{2}\right)_{j}\left(m+1+\frac{\al}{2}\right)_{j}}{(m+1)_{j}}\right)b^{2j-1} + \sum_{k=0}^{\infty}\frac{\left(\frac{\al}{2}\right)_{k}\left(m+1+\frac{\al}{2}\right)_{k}}{(m+1)_{k}}\frac{1}{k!}b^{2k+1} \\
& = \sum_{j=0}^{\infty}\left(\frac{1}{(j+1)!}\frac{\left(\frac{\al}{2}\right)_{j+1}}{(m+1)_{j+1}}\left(\left(m+\frac{\al}{2}\right)_{j+1}-\left(m+1+\frac{\al}{2}\right)_{j+1}\right) - \frac{1}{j!}\frac{\left(\frac{\al}{2}\right)_{j}\left(m+1+\frac{\al}{2}\right)_{j}}{(m+1)_{j}}\right)b^{2j+1} \\
& = \sum_{j=0}^{\infty}\left(\frac{1}{j!}\frac{\left(\frac{\al}{2}\right)_{j}\left(m+1+\frac{\al}{2}\right)_{j}}{(m+1)_{j}}\left(\frac{1}{j+1}\left(\frac{\frac{\al}{2}+j}{m+1+j}\right)\left(m+\frac{\al}{2} - \left(m+1+j+\frac{\al}{2}\right)\right) - 1\right) \right)b^{2j+1} \\
& = \sum_{j=0}^{\infty}\left(\frac{1}{j!}\frac{\left(\frac{\al}{2}\right)_{j}\left(m+1+\frac{\al}{2}\right)_{j}}{(m+1)_{j}}\left(\frac{m+1-\frac{\al}{2}}{m+1+j} \right)\right)b^{2j+1} > 0\\
\end{align*}

Finally, using that the sum telescopes

\begin{align*}
&  b\Lambda_{1}'(b) + \left(1+\frac{\al}{2}\right)\Lambda_{1}(b) - b\Lambda_{m}'(b) + \left(1+\frac{\al}{2}\right)\Lambda_{m}(b) \\
& = \sum_{k=1}^{m-1} \left(b\Lambda_{k}'(b) + \left(1+\frac{\al}{2}\right)\Lambda_{k}(b) - \left(b\Lambda_{k+1}'(b) + \left(1+\frac{\al}{2}\right)\Lambda_{k+1}(b)\right)\right) > 0,
\end{align*}

we conclude that $Q_{-}(b,m) \leq \Theta_m$. This finishes the proof of the proposition.

\end{proof}

In particular, this shows that if there is a solution $ 0 < b_m^{*} < 1$, then  $\Theta_{m} = Q_{+}(b_{m}^{*},m)$ has to be satisfied for some $b_{m}^{*}$ (since $\Theta_{m} = Q_{-}(b_{m}^{*},m)$ cannot hold). We now turn to the study of $Q_{+}(b,m)$ as a function of $b$. We have that:

\begin{align*}
 \lim_{b \to 1} Q_{+}(b,m) > \Theta_{m}
\end{align*}

This follows from Lemma \ref{lambdanb1}, since

\begin{align*}
 \lim_{b \to 1} Q_{+}(b,m) - \Theta_{m} = \Lambda_{m}(1) + \Lambda_1(1) - (\Lambda_{1}(1) - \Lambda_{m}(1)) = 2 \Lambda_{m}(1) > 0.
\end{align*}

Moreover,
\begin{align*}
 \lim_{b \to 0} Q_{+}(b,m) = 0,
\end{align*}

thus, by continuity, there exists $0 < b_{m}^{*} < 1$ such that $\Theta_{m} = Q_{+}(b_{m}^{*},m)$. Moreover, for that $b_{m}^{*}$, we have that

\begin{align}\label{cotathetaqplus}
 \Theta_{m} = Q_{+}(b_{m}^{*},m) > \frac12((b_{m}^{*})^{\al} + (b_{m}^{*})^{2})\Lambda_{1}(b_{m}^{*}) + \frac12((b_{m}^{*})^{\al}-(b_{m}^{*})^{2})\Lambda_{1}(b_{m}^{*}) 
= ((b_{m}^{*})^{\al})\Lambda_{1}(b_{m}^{*}) > ((b_{m}^{*})^{2})\Lambda_{1}(b_{m}^{*})
\end{align}

The next step is to show uniqueness. To do so, we will show that $Q_{+}(b,m)$ is increasing in $b$. We start considering

\begin{align*}
 \tilde{Q}_{+}(b,m) = \frac{1}{\Lambda_{1}(b)b^{\al}}Q_{+}(b,m)
= (1+b^{2-\al}) + \sqrt{(1-b^{2-\al})^{2} + 4b^{2-\al}j(b)}
\end{align*}

and we will show that $\tilde{Q}_{+}(b,m)$ is increasing in $b$. This is enough since $\Lambda_{1}(b)b^{\al}$ is an increasing function of $b$ as well. Taking a derivative with respect to $b$, one obtains:

\begin{align*}
 \pa_b \tilde{Q}_{+}(b,m) & = \frac{(2-\al)b^{1-\al}}{\sqrt{(1-b^{2-\al})^{2} + 4b^{2-\al}j(b)}} \\
& \times \left(\underbrace{\sqrt{(1-b^{2-\al})^{2} + 4b^{2-\al}j(b)} + (b^{2-\al}-1)}_{>0} + \underbrace{2j(b)}_{>0} +  \underbrace{\frac{2}{2-\al}bj'(b)}_{>0 \text{ by Lemma \ref{lemajcreciente}}}\right) > 0,
\end{align*}

as desired. Finally, we study $Q_{+}(b,m)$ as a function of $m$ and show that $b_{n}^{*} > b_{m}^{*}$ if $n > m$. This follows easily since $\Lambda_{m}(b)$ is a decreasing function of $m$ for fixed $b$. Therefore, since $\Theta_{n}$ is an increasing function of $n$, if $n > m$, then $Q_{+}(b_{m}^{*},n) < Q_{+}(b_{m}^{*},m) = \Theta_{m} < \Theta_n$ which implies $b_{n}^{*} > b_{m}^{*}$.

The one-dimensionality of the rank of $M_{m}^{\al}(b_m^{*})$ follows from the fact that $b_{m}^{*} \Lambda_m(b_{m}^{*}) \neq 0$.
\end{proof}

\begin{rem}

We remark that this approach breaks down for the 2D Euler case, where $\Delta_{m}^{0}(b) \neq 0$ for all $0 < b < 1$. Indeed, we have that

\begin{align*}
\displaystyle
M_{m}^{0}(b) = 
\left(
\begin{array}{cc}
\displaystyle
\frac{b^{2}}{2} - \frac12 + \frac{1}{2m} & 
\displaystyle- \frac{b^{m+1}}{2m} \\
\displaystyle\frac{b^{m}}{2m} & 
\displaystyle-\frac{b}{2m}
\end{array}
\right)
\end{align*}

Computing $\Delta_{m}^{0}(b)$ we obtain
\begin{align*}
\Delta_{m}^{0}(b) & = \left(\frac{b^{2}}{2} - \frac12 + \frac{1}{2m}\right)\left(-\frac{b}{2m}\right) + b\left(\frac{b^{m}}{2m}\right)^{2} = \frac{b}{4m^{2}}\left(b^{2m} - ((b^{2}-1)m+1)\right) \\
& = \frac{b(b^2-1)}{4m^{2}}\left(\frac{b^{2m} - 1}{b^2-1} - m\right) = \frac{b(b^2-1)}{4m^{2}}\left((1-1)+(b^2-1)+\ldots+(b^{2m-2}-1)\right)
\end{align*}

It is therefore clear that $\Delta_{m}^{0}(b)$ never vanishes. 
\end{rem}

\subsubsection{Codimension of the image of the linear operator.}

Let $m \geq 2$ be fixed and let $b_m^{*}$ be the value of $b$ found in Proposition \ref{propbstar}. We now characterize the image of $DF(b_m^{*},0,0)$. We have the following Lemma:

\begin{lemma}
Let

\begin{align*}
Z = \left\{(Q,q) \in Y^{k-1,m}_{c} \times Y^{k-1,m}_{c}, Q(x) = \sum_{k=1}^{\infty}Q_{km}\sin(kmx), q(x) = \sum_{k=1}^{\infty}q_{km}\sin(kmx), \right.\\ \left.\exists \lambda_{Q,q} \in \mathbb{R} \text{ s.t.} 
\left(\begin{array}{c}Q_{m} \\ q_{m}\end{array}\right)= \lambda_{Q,q} \left(
\begin{array}{c}
 -\Theta_{m} + \left(b_{m}^{*}\right)^{2}\Lambda_{1}(b_{m}^{*}) \\
b_{m}^{*} \Lambda_{m}(b_{m}^{*})
\end{array}
\right)\right\}.
\end{align*}

Then $Z = \text{Im}\left(DF(b_{m}^{*},0,0)\right)$.

\end{lemma}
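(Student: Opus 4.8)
The plan is to show the two inclusions $Z \subseteq \operatorname{Im}(DF(b_m^*,0,0))$ and $\operatorname{Im}(DF(b_m^*,0,0)) \subseteq Z$ separately, working throughout with the explicit Fourier diagonalization of $DF(b_m^*,0,0)$ established in the Proposition above: on the subspace of $m$-fold data, $DF(b_m^*,0,0)$ acts on the $km$-th mode by the $2\times 2$ matrix $(-km)M_{km}^\al(b_m^*)$. So the image is governed entirely by the invertibility of the matrices $M_{km}^\al(b_m^*)$ for $k \geq 1$. First I would record that for $k \geq 2$ the matrix $M_{km}^\al(b_m^*)$ is invertible: by Proposition \ref{propbstar} the determinant $\Delta_n^\al(b)$ vanishes, for fixed $\al$, only at $b = b_n^*$, and the sequence $b_n^*$ is strictly increasing in $n$; since $b_{km}^* > b_m^*$ for $k \geq 2$, we have $\Delta_{km}^\al(b_m^*) \neq 0$. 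Hence for every $m$-fold target $(Q,q)$ with modes $(Q_{km},q_{km})$, the modes with $k \geq 2$ can be inverted mode-by-mode (the resulting sequence lies in the right space because $M_{km}^\al(b_m^*)^{-1}$ is bounded uniformly in $k$ — its entries grow at most polynomially while the $-km$ prefactor is harmless at the level of the loss-of-derivative bookkeeping already built into the functional setup of Step 1 and Step 6), and the only genuine constraint comes from the $k=1$ mode, where the matrix $M_m^\al(b_m^*)$ has rank one.

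For the $k=1$ mode, I would use that $\operatorname{rk}(M_m^\al(b_m^*)) = 1$ (Proposition \ref{propbstar}), so that $M_m^\al(b_m^*)$ has a one-dimensional image spanned by any nonzero column. Since $b_m^* \Lambda_m(b_m^*) \neq 0$, the second column $(-(b_m^*)^2 \Lambda_m(b_m^*),\ (b_m^*)^{1-\al}\Theta_m - b_m^*\Lambda_1(b_m^*))^T$ is nonzero, and one checks it is parallel to $(-\Theta_m + (b_m^*)^2\Lambda_1(b_m^*),\ b_m^*\Lambda_m(b_m^*))^T$: indeed rank one forces the two columns to be proportional, and the stated vector is, up to the nonzero scalar $b_m^*\Lambda_m(b_m^*)/((b_m^*)^{1-\al}\Theta_m - b_m^*\Lambda_1(b_m^*))$ when that denominator is nonzero — and if that denominator vanishes then $\Delta_m^\al(b_m^*) = -(b_m^*)^2\Lambda_m(b_m^*)\cdot b_m^*\Lambda_m(b_m^*) + (b_m^*)^3\Lambda_m(b_m^*)^2 = 0$ is automatic and one reads off the spanning vector directly as $(-\Theta_m + (b_m^*)^2\Lambda_1(b_m^*),\ b_m^*\Lambda_m(b_m^*))^T$, which is nonzero because its second entry is. Either way, $\operatorname{Im}(M_m^\al(b_m^*)) = \langle (-\Theta_m + (b_m^*)^2\Lambda_1(b_m^*),\ b_m^*\Lambda_m(b_m^*))^T \rangle$. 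Multiplying by the scalar $-m$ does not change the span. Therefore the $k=1$ mode of $DF(b_m^*,0,0)[H,h]$ can be any multiple of exactly this vector, which is precisely the defining condition of $Z$.

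Combining the two parts: given $(Q,q) \in Z$, write $(Q_m,q_m) = \lambda_{Q,q}(-\Theta_m + (b_m^*)^2\Lambda_1(b_m^*),\ b_m^*\Lambda_m(b_m^*))^T$; choose the $k=1$ Fourier coefficients $(A_m,a_m)$ of the preimage so that $(-m)M_m^\al(b_m^*)(A_m,a_m)^T = (Q_m,q_m)^T$ (possible since the RHS lies in the image of $M_m^\al(b_m^*)$), and for $k\geq 2$ set $(A_{km},a_{km})^T = \frac{1}{-km}M_{km}^\al(b_m^*)^{-1}(Q_{km},q_{km})^T$; the resulting $(H,h)$ lies in the appropriate $m$-fold analytic space and maps to $(Q,q)$, giving $Z \subseteq \operatorname{Im}(DF(b_m^*,0,0))$. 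Conversely, any $(H,h)$ in the domain produces a target whose $k=1$ mode lies in $\operatorname{Im}(M_m^\al(b_m^*)) = \langle(-\Theta_m + (b_m^*)^2\Lambda_1(b_m^*),\ b_m^*\Lambda_m(b_m^*))^T\rangle$, hence lies in $Z$. I expect the main technical obstacle to be the functional-analytic bookkeeping: verifying that the mode-by-mode inverse for $k \geq 2$ actually produces an element of the correct analytic Sobolev-type space $X^{\bullet,m}_c$ (respectively $X^{\bullet+\log,m}_c$, $X^{\bullet+\al-1,m}_c$) rather than merely a formal Fourier series — that is, controlling the operator norm of $M_{km}^\al(b_m^*)^{-1}$ as $k\to\infty$ and matching it against the gain/loss of derivatives encoded in the spaces of Step 1 and Step 6. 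This is the part that requires the asymptotics of $\Lambda_n(b)$ and $\Theta_n$ in $n$, but it is routine given the estimates already used for the well-definedness of $F$ and $DF$.
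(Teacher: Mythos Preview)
Your proposal is correct and follows essentially the same route as the paper: both directions of the inclusion via the mode-by-mode diagonalization, invertibility of $M_{km}^\al(b_m^*)$ for $k\geq 2$ (you invoke the monotonicity of $b_n^*$, which is a clean way to see $\Delta_{km}^\al(b_m^*)\neq 0$), and the rank-one image at $k=1$ giving exactly the constraint defining $Z$.

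The one place where you are a little too casual is the regularity bookkeeping, which you flag but then call ``routine given the estimates already used for the well-definedness of $F$ and $DF$.'' In fact the paper does not reuse those estimates; it proves a dedicated asymptotic lemma for $\Delta_n^\al(b)$ as $n\to\infty$ (constant for $\al<1$, $\sim -\pi^{-2}b^{1-\al}\log^2 n$ for $\al=1$, $\sim p_\al n^{2\al-2}$ for $\al>1$) and combines it with the growth of $\Theta_n$ to get that the full inverse (including the $1/(km)$ factor) has diagonal entries decaying like $1/k$, $1/(k\log k)$, $1/k^\al$ respectively. These precise rates are exactly what matches the gap between $Y^{k-1,m}_c$ and the three $\al$-dependent domain spaces $X^{k,m}_c$, $X^{k+\log,m}_c$, $X^{k+\al-1,m}_c$. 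Your phrasing ``entries grow at most polynomially'' undersells this: the point is not polynomial boundedness but the specific decay that buys back the derivative (or $\log$, or $\al-1$ derivatives) lost by $DF$. So the step is not hard, but it does require computing those asymptotics rather than quoting Step~1.
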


\begin{proof}

We start proving that $\text{Im}\left(DF(b_{m}^{*},0,0)\right) \subset Z$. This follows easily 
since $DF$ maps

\begin{align*}
\left\{
\begin{array}{cc}
X^{k,m}_{c} \times X^{k,m}_{c} & \text{ if } \al < 1 \\
X^{k+\log,m}_{c} \times X^{k+\log,m}_{c} & \text{ if } \al = 1 \\
X^{k+\al-1,m}_{c} \times X^{k+\al-1,m}_{c} & \text{ if } \al > 1
\end{array}
\right\}
\text{ into } Y^{k-1,m}_{c} \times Y^{k-1,m}_{c}
\end{align*}

and by the explicit formula of the $m$-th mode contribution of $DF$.

We now prove the other implication and show that $Z \subset \text{Im}\left(DF(b_{m}^{*},0,0)\right)$. Let $(Q(x),q(x)) \in Z$. We want to show that there exists a 

\begin{align*}
(H(x),h(x)) \in
\left\{
\begin{array}{cc}
X^{k,m}_{c} \times X^{k,m}_{c} & \text{ if } \al < 1 \\
X^{k+\log,m}_{c} \times X^{k+\log,m}_{c} & \text{ if } \al = 1 \\
X^{k+\al-1,m}_{c} \times X^{k+\al-1,m}_{c} & \text{ if } \al > 1
\end{array}
\right\}
\end{align*}

 such that $DF(b_{m}^{*},0,0)\left[\begin{array}{cc}H\\h \end{array}\right] = \left(\begin{array}{cc}Q \\ q\end{array}\right)$. Let us project $H,h$ into Fourier modes as

\begin{align*}
H(x) = \sum_{k=1}^{\infty}H_{km}\cos(kmx), \quad h(x) = \sum_{k=1}^{\infty}h_{km}\cos(kmx).
\end{align*}

This yields the following system of equations for any $k$:

\begin{align*}
(-km) M_{km}^{\al}(b_{m}^{*})
\left(\begin{array}{c}H_{km} \\ h_{km} \end{array}\right)
= 
(-km) \left(
\begin{array}{cc}
 -\Theta_{km} + (b_{m}^{*})^{2} \Lambda_1(b_{m}^{*}) & - (b_{m}^{*})^{2}\Lambda_{km}(b_{m}^{*}) \\
b_{m}^{*} \Lambda_{km}(b_{m}^{*}) & (b_{m}^{*})^{1-\al}\Theta_{km} - b_{m}^{*} \Lambda_1(b_{m}^{*})
\end{array}
\right)
\left(\begin{array}{c}H_{km} \\ h_{km} \end{array}\right) = \left(\begin{array}{c}Q_{km} \\ q_{km} \end{array}\right),
\end{align*}

which has as solutions:

\begin{align*}
\left(\begin{array}{c}H_{km} \\ h_{km} \end{array}\right) &
= (M_{km}^{\al}(b_m^{*}))^{-1}\left(\begin{array}{c}Q_{km} \\ q_{km} \end{array}\right) \\
& = -\frac{1}{km}\frac{1}{\Delta_{km}^{\al}(b_m^*)}
 \left(
\begin{array}{cc}
(b_{m}^{*})^{1-\al}\Theta_{km} - b_{m}^{*} \Lambda_1(b_{m}^{*})  &  (b_{m}^{*})^{2}\Lambda_{km}(b_{m}^{*}) \\
-b_{m}^{*} \Lambda_{km}(b_{m}^{*}) & -\Theta_{km} + (b_{m}^{*})^{2} \Lambda_1(b_{m}^{*})
\end{array}
\right)
\left(\begin{array}{c}Q_{km} \\ q_{km} \end{array}\right)
\end{align*}

whenever $k \neq 1$ and:

\begin{align*}
H_{m} = -\frac{1}{m}\lambda_{Q,q}, \quad h_{m} = 0.
\end{align*}

Note that there are more solutions for $(H_m,h_m)$. This shows the existence of a candidate $(H,h)$. We now show that this candidate has the desired regularity. To do so, we need the following additional asymptotic Lemma:

\begin{lemma}\label{lemmaasymptoticdet}
Let $0 < \al < 2, 0 < b < 1$ and let $n \in \mathbb{Z}$. Let $\Delta_{n}^{\al}(b)$ be defined as in Proposition \ref{propbstar}, namely:

\begin{align*}
\Delta^{\al}_{n}(b) 
& = \left(b^{1-\al}\Theta_n - b \Lambda_1(b)\right)\left( -\Theta_n + b^{2} \Lambda_1(b)\right) + b^{3}\Lambda_{n}(b)^{2}
\end{align*}

Then $\Delta^{\al}_{n}(b)$ has the following asymptotic behaviour (with non-zero leading terms) as $n \to \infty$:

\begin{align*}
\Delta^{\al}_{n}(b) 
= 
\left\{
\begin{array}{cc}
\mu_{\al} + \frac{\nu_{\al}}{n^{1-\al}} + O\left(\frac{1}{n^{2-\al}}\right) & \text{ if $\al < 1$} \\
-(\log(n))^2\frac{b^{1-\al}}{\pi^2} + O(\log(n)) & \text{ if $\al = 1$} \\
\frac{p_{\al}}{n^{2-2\al}} + \frac{q_{\al}}{n^{1-\al}} + O\left(1\right) & \text{ if $\al > 1$} 
\end{array}
\right.
\end{align*}

with

\begin{align*}
\mu_{\al} = (-\Lambda_1(1) + b^2\Lambda_1(b))(b^{1-\al}\Lambda_1(1) - b\Lambda_1(b))\\
\nu_{\al} = \left(1-\frac{\al}{2}\right)\Lambda_1(1)(2b^{1-\al}\Lambda_1(1) - b(1+b^{2-\al})\Lambda_1(b))e^{\al \gamma + c_{\al}}\\
p_{\al} = -b^{1-\al}\left(\frac{\Gamma(1-\al)}{2^{1-\al}\Gamma^{2}\left(1-\frac{\al}{2}\right)}\right)^{2}\\
q_{\al} = -\frac{\Gamma(1-\al)}{2^{1-\al}\Gamma^{2}\left(1-\frac{\al}{2}\right)}b\Lambda_1(b)(1+b^{2-\al})
\end{align*}

and $\gamma, c_{\al}$ some finite constants.

\end{lemma}
\begin{proof}

We start by noticing the exponential decay in $n$ of $\Lambda_{n}(b)$ (see \cite{delaHoz-Hassainia-Hmidi:doubly-connected-vstates-gsqg}). Next, we have the asymptotic expansion for $\Theta_{n}$:

\begin{align*}
\Theta_n \sim
\left\{
\begin{array}{cc}
\Lambda_1(1) - \left(1-\frac{\al}{2}\right)\Lambda_1(1)\frac{e^{\al \gamma + c_{\al}}}{n^{1-\al}} + O(n^{\al-2}) & \text{ if $\al < 1$} \\
\frac{1}{\pi}\log(n) + O(1) & \text{ if $\al = 1$}
\end{array}
\right.
\end{align*}

which was proved in \cite{Hassainia-Hmidi:v-states-generalized-sqg} for $\al \leq 1$ we obtain $\mu_{\al}$ and $\nu_{\al}$. This shows the asymptotics for $\al \leq 1$. For $\al > 1$ it follows from the expression (see \cite{Castro-Cordoba-GomezSerrano:existence-regularity-vstates-gsqg}):

\begin{align*}
\Theta_{n} = \frac{\Gamma(1-\al)}{2^{1-\al}\Gamma^{2}\left(1-\frac{\al}{2}\right)}\left(\frac{\Gamma\left(1+\frac{\al}{2}\right)}{\Gamma\left(2-\frac{\al}{2}\right)}-\frac{\Gamma\left(n+\frac{\al}{2}\right)}{\Gamma\left(n+1-\frac{\al}{2}\right)}\right)
\end{align*}

and the asymptotic formulas for the Gamma function \cite[Formula 6.1.46, p.257]{Abramowitz-Stegun:handbook-mathematical-functions}.  

All we are left to show is that $\mu_{\al} \neq 0$ for $\al < 1$, and that $p_{\al} \neq 0$ for $\al > 1$. The former is an immediate consequence of the monotonicity in $b$ of $\Lambda_n(b)$ (Lemma \ref{lemmaA2}) and the latter is trivial.

\end{proof}

Using this Lemma, one easily obtains the following asymptotics for the inverse of $M_{km}^{\al}(b_m^{*})$ as $k \to \infty$:

\begin{corollary}\label{corinvtheta}
\begin{align*}
& (M_{km}^{\al}(b_m^{*}))^{-1}_{12} = -\frac{1}{km}\frac{1}{\Delta_{km}^{\al}(b_m^*)}(b_{m}^{*})^{2}\Lambda_{km}(b_{m}^{*}) \\
& \sim (M_{km}^{\al}(b_m^{*}))^{-1}_{21} =
-\frac{1}{km}\frac{1}{\Delta_{km}^{\al}(b_m^*)}\left(-b_{m}^{*} \Lambda_{km}(b_{m}^{*})\right) \sim O(1) \\
%\left\{
%\begin{array}{cc}
%\frac{1}{k} & \text{ if $\al < 1$} \\
%\frac{1}{k\log(k)} & \text{ if $\al = 1$} \\
%\frac{1}{k^{\al}} & \text{ if $\al > 1$}
%\end{array}
%\right. \\
& (M_{km}^{\al}(b_m^{*}))^{-1}_{11} = -\frac{1}{km}\frac{1}{\Delta_{km}^{\al}(b_m^*)}\left((b_{m}^{*})^{1-\al}\Theta_{km} - b_{m}^{*} \Lambda_1(b_{m}^{*})\right) \\
& \sim 
(M_{km}^{\al}(b_m^{*}))^{-1}_{22} = -\frac{1}{km}\frac{1}{\Delta_{km}^{\al}(b_m^*)}\left( -\Theta_{km} + (b_{m}^{*})^{2} \Lambda_1(b_{m}^{*})\right) \sim
\left\{
\begin{array}{cc}
\frac{1}{k} & \text{ if $\al < 1$} \\
\frac{1}{k\log(k)} & \text{ if $\al = 1$} \\
\frac{1}{k^{\al}} & \text{ if $\al > 1$}
\end{array}
\right.
\end{align*}
\end{corollary}

We now distinguish cases depending on $\alpha$. For $\al < 1$:

\begin{align*}
\|H\|_{X^{k,m}_{c}}^{2}+\|h\|_{X^{k,m}_{c}}^{2}
& =  \sum_{j=1}^{\infty} (|H_{jm}|^{2}+|h_{jm}|^{2})(1+jm)^{2k}(\cosh(cjm)^{2} + \sinh(cjm)^{2}) \\
& = \frac{1}{m^{2}}\lambda_{Q,q}^{2}(1+m)^{2k}(\cosh(c)^{2} + \sinh(c)^{2}) \\
& + \sum_{j=2}^{\infty} (1+jm)^{2k}(\cosh(cjm)^{2} + \sinh(cjm)^{2})\\
& \times \left\{[(M_{jm}^{\al}(b_m^{*}))^{-1}_{11}Q_{mj} + (M_{jm}^{\al}(b_m^{*}))^{-1}_{12}q_{mj}]^{2}+
[(M_{jm}^{\al}(b_m^{*}))^{-1}_{21}Q_{mj} + (M_{jm}^{\al}(b_m^{*}))^{-1}_{22}q_{mj}]^{2}
\right\} \\
& \leq C + C\|Q\|_{Y^{k-1,m}_{c}}^{2} + C\|q\|_{Y^{k-1,m}_{c}}^{2} < \infty,
\end{align*}

where in the last line we have used corollary \ref{corinvtheta}. For $\al > 1$ and $\al = 1$, one obtains using the same approach and the asymptotics from corollary \ref{corinvtheta}:

\begin{align*}
\|H\|_{X^{k+\al-1,m}_{c}}^{2}+\|h\|_{X^{k+\al-1,m}_{c}}^{2}
& \leq C + C\|Q\|_{Y^{k-1,m}_{c}}^{2} + C\|q\|_{Y^{k-1,m}_{c}}^{2} < \infty \\
\|H\|_{X^{k+\log,m}_{c}}^{2}+\|h\|_{X^{k+\log,m}_{c}}^{2}
& \leq C + C\|Q\|_{Y^{k-1,m}_{c}}^{2} + C\|q\|_{Y^{k-1,m}_{c}}^{2} < \infty,
\end{align*}

respectively. This concludes that $Z = \text{Im}\left(DF(b_{m}^{*},0,0)\right)$ and in particular shows that the codimension of the image of $DF(b_{m}^{*},0,0)$ is 1, as we needed.

\end{proof}

\subsection{Step 5}

This step is devoted to show the transversality condition. We start writing out the calculations since everything is explicit, including the characterization of the image done in the previous subsection. Based on that, we have the following:

\begin{align*}
 \pa_{b} M_{m}^{\al}(b_{m}^{*}) & = \left(
\begin{array}{cc}
 (b_{m}^{*})^{2} \Lambda_1'(b_{m}^{*}) + 2b_{m}^{*} \Lambda_{1}(b_{m}^{*}) & - (b_{m}^{*})^{2}\Lambda_{n}'(b_{m}^{*}) - 2b_{m}^{*} \Lambda_{n}(b_{m}^{*}) \\
b_{m}^{*} \Lambda_n'(b_{m}^{*}) + \Lambda_{n}(b_{m}^{*}) & (1-\al)(b_{m}^{*})^{-\al}\Theta_n - b_{m}^{*} \Lambda_1'(b_{m}^{*}) - \Lambda_{1}(b_{m}^{*})
\end{array}
\right)
\end{align*}

Letting 

\begin{align*}
 v_{0}(b_{m}^{*}) = 
\left(
\begin{array}{c}
 (b_{m}^{*})^{2} \Lambda_{n}(b_{m}^{*}) \\
(b_{m}^{*})^{2}\Lambda_{1}(b_{m}^{*}) - \Theta_{n}
\end{array}
\right), \quad
 w(b_{m}^{*}) = 
\left(
\begin{array}{c}
 -\Theta_{n} + (b_{m}^{*})^{2}\Lambda_{1}(b_{m}^{*}) \\
b_{m}^{*} \Lambda_{n}(b_{m}^{*})
\end{array}
\right), 
\end{align*}

be the generators of Ker$(M_{m}^{\al}(b_{m}^{*}))$ and Im$(M_{m}^{\al}(b_{m}^{*}))$ respectively, the transversality condition is equivalent to prove that $w_1(b_{m}^{*})$ and $w(b_{m}^{*})$ are not parallel, where

\begin{align*}
w_{1}(b_{m}^{*}) & =  \pa_{b} M_{m}^{\al}(b_{m}^{*}) v_{0}(b_{m}^{*}) \\
& = 
\left(
\begin{array}{c}
(2b_{m}^{*}\Lambda_{1}(b_{m}^{*}) + (b_{m}^{*})^{2}\Lambda_{1}'(b_{m}^{*}))(b_{m}^{*})^{2}\Lambda_{n}(b_{m}^{*}) - (2b_{m}^{*}\Lambda_{n}(b_{m}^{*}) + (b_{m}^{*})^{2}\Lambda_{n}'(b_{m}^{*}))((b_{m}^{*})^{2}\Lambda_{1}(b_{m}^{*}) - \Theta_n) \\
(\Lambda_{n}(b_{m}^{*}) + b_{m}^{*}\Lambda_{n}'(b_{m}^{*}))(b_{m}^{*})^{2}\Lambda_{n}(b_{m}^{*}) + ((1-\al)(b_{m}^{*})^{-\al}\Theta_{n} - \Lambda_{1}(b_{m}^{*}) - b_{m}^{*}\Lambda_{1}'(b_{m}^{*}))((b_{m}^{*})^{2}\Lambda_{1}(b_{m}^{*}) - \Theta_{n})
\end{array}
\right)
\end{align*}

In order to do so, we claim that both components of $w_{1}(b_{m}^{*})$ have the same (positive) sign, whereas the two components of $w(b_{m}^{*})$ have opposite signs. The latter is easy to establish and follows from Lemma \ref{lambdanb1} and \eqref{cotathetaqplus}. We focus on showing that both components of $w_{1}(b_{m}^{*})$ are positive. The first one is equal to

\begin{align*}
 & (b_{m}^{*})^{4}\Lambda_{1}'(b_{m}^{*})\Lambda_{m}(b_{m}^{*}) - (b_{m}^{*})^{4}\Lambda_{1}(b_{m}^{*})\Lambda'_{m}(b_{m}^{*}) + \Theta_{m}(2b_{m}^{*}\Lambda_{m}(b_{m}^{*}) + (b_{m}^{*})^{2}\Lambda_{m}'(b_{m}^{*})) \\
& > (b_{m}^{*})^{4}\Lambda_{1}'(b_{m}^{*})\Lambda_{m}(b_{m}^{*}) - (b_{m}^{*})^{4}\Lambda_{1}(b_{m}^{*})\Lambda'_{m}(b_{m}^{*}) + (b_{m}^{*})^{2}\Lambda_{1}(b_{m}^{*})(2b_{m}^{*}\Lambda_{m}(b_{m}^{*}) + (b_{m}^{*})^{2}\Lambda_{m}'(b_{m}^{*})) \\
& = (b_{m}^{*})^{4}\Lambda_{1}'(b_{m}^{*})\Lambda_{m}(b_{m}^{*}) + 2(b_{m}^{*})^{3}\Lambda_{1}(b_{m}^{*})\Lambda_{m}(b_{m}^{*})  > 0,
\end{align*}

and the second one is

\begin{align*}
& (b_{m}^{*})^{2}\Lambda_{m}(b_{m}^{*})^{2} + (b_{m}^{*})^{3}\Lambda_{m}'(b_{m}^{*}) + (1-\al)(b_{m}^{*})^{2-\al}\Theta_{m}\Lambda_{1}(b_{m}^{*}) - (1-\al)(b_{m}^{*})^{-\al}\Theta_{m}^{2} \\
 & - (b_{m}^{*})^{2}\Lambda_{1}(b_{m}^{*})^{2} + \Lambda_{1}(b_{m}^{*})\Theta_{m} - (b_{m}^{*})^{3}\Lambda_{1}'(b_{m}^{*})\Lambda_{1}(b_{m}^{*}) + b_{m}^{*}\Lambda_{1}'(b_{m}^{*})\Theta_{m} \\
& = \al[ (b_{m}^{*})^{2}(\Lambda_{m}(b_{m}^{*})^{2} - \Lambda_{1}(b_{m}^{*})^{2}) + \Theta_{m} \Lambda_{1}(b_{m}^{*})] + (b_{m}^{*})^{3}\Lambda_{m}'(b_{m}^{*}) + [b_{m}^{*}\Lambda_{1}'(b_{m}^{*})\Theta_{m} - (b_{m}^{*})^{3} \Lambda_{1}(b_{m}^{*})\Lambda_{1}'(b_{m}^{*})]
\end{align*}

where we have used that

\begin{align*}
 -(b_{m}^{*})^{-\al}\Theta_{m}^{2} + \Theta_{m}(b_{m}^{*})^{2-\al}\Lambda_{1}(b_{m}^{*}) = - \Theta_{m}\Lambda_{1}(b_{m}^{*}) - (b_{m}^{*})^{2}(\Lambda_{m}(b_{m}^{*})^{2} - \Lambda_{1}(b_{m}^{*})^{2}).
\end{align*}

Both square brackets are positive by \eqref{cotathetaqplus}, and the claim follows.

\subsection{Step 6}

This follows easily by doing the change of variables $y \rightarrow -y$ and $y \rightarrow y + \frac{2\pi}{m}$ inside the integral operators.

\section*{Acknowledgements}

 J.G.-S. was partially supported by the grant MTM2014-59488-P (Spain), by the ICMAT-Severo Ochoa grant SEV-2015-0554, by the Simons Collaboration Grant 524109 and by the NSF-DMS 1763356 Grant. We would like to thank Angel Castro and Diego C\'ordoba for useful discussions.

\appendix

\section{Hypergeometric function identities}

Here we collect a few facts about $\Theta_{m}$ and $\Lambda_{m}$, and about hypergeometric functions that will be used along the proofs. Recall that $\Theta_{m}$ and $\Lambda_{m}$ were defined in \eqref{defilambdan} by:

\begin{align*}
 \Lambda_n(b) & \equiv \frac{1}{b} \int_{0}^{\infty} \frac{1}{t^{1-\al}}J_n(bt)J_n(t) dt \\
& = \frac{\Gamma\left(\frac{\al}{2}\right)}{\Gamma\left(1-\frac{\al}{2}\right)2^{1-\al}} \frac{\left(\frac{\al}{2}\right)_{n}}{n!}b^{n-1} F\left(\frac{\al}{2}, n+\frac{\al}{2},n+1,b^{2}\right), \\
& = \frac{b^{n-1}}{2^{1-\al}\Gamma\left(1-\frac{\al}{2}\right)^{2}} \int_{0}^{1} x^{n-1+\frac{\al}{2}}(1-x)^{-\frac{\al}{2}}(1-b^{2}x)^{-\frac{\al}{2}}dx. \\
\Theta_{n} & \equiv \Lambda_{1}(1) - \Lambda_{n}(1)
\end{align*}

\begin{lemma}
 We have the following identities for the hypergeometric function:

\begin{eqnarray}\label{derivadahiper}
 \frac{\partial}{\partial x} \hypg(a,b,c,x) = \frac{ab}{c}\hypg(a+1,b+1,c+1,x) \\
\label{hypergeometricidentity1}
 c\hypg(a,b,c,z) - c\hypg(a,b+1,c,z) + az\hypg(a+1,b+1,c+1,z) = 0 \\
 \label{hypergeometricidentity2}
 c\hypg(a,b,c,z) - (c-b)\hypg(a,b+1,c,z) - b\hypg(a,b+1,c+1,z) = 0 \\
 \label{hypergeometricidentity3}
 c\hypg(a,b-1,c,z) + (a-c)\hypg(a,b,c+1,z) + (z-1)c\hypg(a,b,c,z) = 0 \\
 \label{312hmidi}
c\hypg(a,b,c,z) - c\hypg(a+1,b,c,z) + bz\hypg(a+1,b+1,c+1,z) = 0 \\
\label{316hmidi}
b\hypg(a,b+1,c,z) - a\hypg(a+1,b,c,z) + (a-b)\hypg(a,b,c,z) = 0
\end{eqnarray}

\end{lemma}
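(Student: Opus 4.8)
The plan is to derive every formula directly from the power-series representation $\hypg(a,b,c,z)=\sum_{k\ge 0}\frac{(a)_k(b)_k}{(c)_k\,k!}z^k$. This series converges absolutely on $|z|<1$ whenever the lower parameter is not a non-positive integer, which is always the case in our applications (there $z=b^{2}\in(0,1)$ and the lower parameters are strictly positive reals of the form $n+1$ or $n+2$). Consequently term-by-term differentiation and multiplication by $z$ are legitimate on $|z|<1$, and each identity — being an equality of analytic functions on that disc — will follow once we check that the coefficients of $z^{k}$ agree for every $k\ge 0$.

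For the derivative formula \eqref{derivadahiper} I would differentiate the series termwise, obtaining $\sum_{k\ge 1}\frac{(a)_k(b)_k}{(c)_k(k-1)!}z^{k-1}$, relabel $k\mapsto k+1$, and then use the elementary Pochhammer shifts $(a)_{k+1}=a\,(a+1)_k$, $(b)_{k+1}=b\,(b+1)_k$, $(c)_{k+1}=c\,(c+1)_k$ together with $(k+1)!=(k+1)\,k!$; the coefficient of $z^{k}$ becomes $\frac{ab}{c}\cdot\frac{(a+1)_k(b+1)_k}{(c+1)_k\,k!}$, which is exactly $\tfrac{ab}{c}$ times the $k$-th coefficient of $\hypg(a+1,b+1,c+1,z)$.

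For the five three-term relations \eqref{hypergeometricidentity1}--\eqref{316hmidi} the scheme is identical: compare the coefficient of $z^{k}$ on both sides. A summand of the form (constant)$\,\cdot z\cdot\hypg(\cdots)$ contributes, for $k\ge 1$, the $(k-1)$-st coefficient of the corresponding series, while the $k=0$ comparison is always the trivial identity among the constant terms (each series has constant term $1$). Writing every term over the common denominator $(c)_k\,k!$ — using $(c)_k=c\,(c+1)_{k-1}$ and $(c+1)_k=\frac{c+k}{c}(c)_k$ to reconcile the different lower parameters — and replacing $(a)_k$ by $a\,(a+1)_{k-1}$ or by $(a)_{k-1}(a+k-1)$ as convenient (and likewise for $(b)_k$), each identity collapses to a polynomial identity in $a,b,c$ with $k$ as a parameter, verified by inspection. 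For example \eqref{316hmidi} reduces coefficientwise to $(b+k)-(a+k)+(a-b)=0$, and \eqref{312hmidi} is obtained from \eqref{hypergeometricidentity1} simply by interchanging the symmetric upper parameters $a$ and $b$. Alternatively, all five are among the classical Gauss contiguous relations and may be quoted from \cite[\S15.2]{Abramowitz-Stegun:handbook-mathematical-functions}.

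The only place requiring care — the part I would double-check — is the bookkeeping in those relations that simultaneously shift the lower parameter $c$ \emph{and} carry an explicit factor $z$, namely \eqref{hypergeometricidentity1}, \eqref{hypergeometricidentity2} and \eqref{hypergeometricidentity3}: there the index shift $k\mapsto k-1$ induced by the factor $z$ has to be combined with the ratio of the two Pochhammer symbols $(c)_k$ and $(c+1)_{k-1}$ (or $(c+1)_k$), and the $k=0$ term must be inspected on its own. Reducing everything to the single denominator $(c)_k\,k!$ and treating that term separately removes all ambiguity, and no idea beyond the Pochhammer manipulations above is needed to finish.
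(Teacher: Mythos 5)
Your route is genuinely different from the paper's: the paper disposes of this lemma with a one-line citation to Rainville's book, whereas you propose a self-contained termwise power-series verification. That strategy is perfectly legitimate, and your treatment of \eqref{derivadahiper}, \eqref{hypergeometricidentity1}, \eqref{312hmidi} (indeed just \eqref{hypergeometricidentity1} with the symmetric upper parameters interchanged) and \eqref{316hmidi} is correct as sketched. The problem is that you postponed precisely the checks that matter. If you actually carry out the coefficient comparison for \eqref{hypergeometricidentity2} and \eqref{hypergeometricidentity3} \emph{as printed}, it fails: in \eqref{hypergeometricidentity3} the constant term of the left-hand side is $c+(a-c)-c=a-c\neq 0$, and in \eqref{hypergeometricidentity2} the coefficient of $z$ already fails (with $a=b=1$, $c=2$ it equals $1-1-\tfrac23\neq 0$). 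So the assertion that each relation ``collapses to a polynomial identity verified by inspection'' is not true of the statement as written; these two formulas are misstated (a contiguous-relation typo), and your method, honestly executed, would disprove rather than prove them.

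What your computation does prove are the corrected relations
\begin{align*}
&c\,\hypg(a,b,c,z) - (c-b)\,\hypg(a,b,c+1,z) - b\,\hypg(a,b+1,c+1,z) = 0,\\
&c\,\hypg(a,b-1,c,z) + (a-c)\,z\,\hypg(a,b,c+1,z) + (z-1)\,c\,\hypg(a,b,c,z) = 0,
\end{align*}
and these are the forms the paper actually applies later, in the proof of the identity for $b\Lambda_m'(b)+\left(1+\tfrac{\al}{2}\right)\Lambda_m(b)$: there the middle term appears with lower parameter $m+2$ (not with the upper parameter shifted), and the $\left(m+1-\tfrac{\al}{2}\right)$-term carries the extra factor $b^2$, i.e.\ the factor $z$. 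With these corrections your scheme (common denominator $(c)_k\,k!$, Pochhammer shifts, separate inspection of the $k=0$ term) closes all six identities, and the $k\ge 1$ check for the two corrected relations reduces to $(c+k)(b)_k-(c-b)(b)_k-(b+k)(b)_k=0$ and $(a+k-1)(b-1)+(a-c)k+(c+k-1)k-(a+k-1)(b+k-1)=0$ respectively. A complete write-up along your lines must therefore either restate the two identities correctly or record the discrepancy; as it stands, the proposal certifies two false formulas at exactly the spot you flagged for double-checking.
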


\begin{proof}
See \cite{Rainville:special-functions}.
\end{proof}

\begin{lemma}\label{lambdanb1}
$\Lambda_n(b)$ is an increasing function of $b$, it satisfies $\Lambda_n(b) \geq 0$ for any $n \geq 1$ and $b \in (0,1]$, and

\begin{align*}
 \lim_{b\to 1}\Lambda_{n}(b) > 0.
\end{align*}

\end{lemma}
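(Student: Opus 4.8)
The plan is to read everything off the integral representation
\[
\Lambda_n(b) = \frac{b^{n-1}}{2^{1-\al}\Gamma\!\left(1-\frac{\al}{2}\right)^{2}}\int_{0}^{1} x^{n-1+\frac{\al}{2}}(1-x)^{-\frac{\al}{2}}(1-b^{2}x)^{-\frac{\al}{2}}\,dx
\]
from \eqref{defilambdan}, since all three assertions are transparent there. First I would record that the prefactor is strictly positive: $1-\frac{\al}{2}\in(0,1)$ so $\Gamma(1-\frac{\al}{2})>0$, and $b^{n-1}>0$. For $b\in(0,1)$ the integral converges, because $(1-x)^{-\al/2}$ is integrable near $x=1$ (as $\frac{\al}{2}<1$), $(1-b^{2}x)^{-\al/2}$ is continuous hence bounded on $[0,1]$, and $x^{n-1+\al/2}$ is bounded; the integrand being strictly positive on $(0,1)$ then yields $\Lambda_n(b)>0$. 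This is the nonnegativity claim for $b\in(0,1)$ (at $b=1$ the value is finite and positive when $\al<1$ and $+\infty$ when $\al\ge 1$, which in particular is positive).

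For the monotonicity, the cleanest route is pointwise in $x$: for fixed $x\in(0,1)$ the integrand $b^{n-1}x^{n-1+\al/2}(1-x)^{-\al/2}(1-b^{2}x)^{-\al/2}$ is strictly increasing in $b\in(0,1)$, because $b^{n-1}$ is nondecreasing and $(1-b^{2}x)^{-\al/2}$ is strictly increasing (the base $1-b^{2}x$ decreases in $b$ while the exponent $-\frac{\al}{2}$ is negative). Integrating preserves the inequality, so $\Lambda_n$ is strictly increasing on $(0,1)$. Equivalently, one may differentiate under the integral sign for $b$ bounded away from $1$, obtaining exactly the formula for $\Lambda_n'(b)/\Lambda_n(b)$ recorded in the proof of Lemma \ref{lemajcreciente}, namely $\frac{n-1}{b}$ plus a manifestly positive term; both pieces are $\ge 0$ and the second is $>0$.

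For the limit I would invoke the monotone convergence theorem on the $b$-increasing integrands as $b\uparrow 1$: $\lim_{b\to 1}\Lambda_n(b)$ equals $\frac{1}{2^{1-\al}\Gamma(1-\al/2)^{2}}\int_{0}^{1}x^{n-1+\al/2}(1-x)^{-\al}\,dx$, a strictly positive quantity — a Beta integral, finite and equal to $\frac{\Gamma(n+\al/2)\Gamma(1-\al)}{2^{1-\al}\Gamma(1-\al/2)^{2}\Gamma(n+1-\al/2)}$ when $\al<1$, and $+\infty$ when $\al\ge 1$. In fact no computation is even needed: since $\Lambda_n$ is increasing and positive on $(0,1)$, $\lim_{b\to 1}\Lambda_n(b)\ge\Lambda_n(b_0)>0$ for any fixed $b_0\in(0,1)$.

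The only point requiring any care — the ``obstacle,'' such as it is — is the endpoint $b=1$ when $\al\ge 1$, where $\Lambda_n(1)=+\infty$; this is why the statement is phrased with a limit, and why one should resist manipulating the hypergeometric series $F\!\left(\frac{\al}{2},n+\frac{\al}{2},n+1,b^{2}\right)$ at $b^{2}=1$, whose defining series converges there only for $\al<1$. An entirely parallel proof could instead be run from the hypergeometric representation in \eqref{defilambdan}, using that $(\frac{\al}{2})_n>0$ and that $F\!\left(\frac{\al}{2},n+\frac{\al}{2},n+1,b^{2}\right)=\sum_{k\ge 0}\frac{(\al/2)_k(n+\al/2)_k}{(n+1)_k\,k!}b^{2k}$ is a power series with strictly positive coefficients — hence positive, at least $1$, and increasing in $b$ — with the limit obtained by monotone convergence of partial sums.
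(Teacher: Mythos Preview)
Your proof is correct and follows exactly the route the paper intends: the paper's entire proof is the single sentence ``This follows from the integral formula \eqref{defilambdan},'' and you have simply written out the details of that reading---positivity of the prefactor and integrand, pointwise monotonicity of the integrand in $b$, and the limit via monotonicity. Your remark about the endpoint $b=1$ when $\al\ge 1$ is a welcome clarification that the paper leaves implicit.
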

\begin{proof}
 This follows from the integral formula \eqref{defilambdan}.
\end{proof}

\begin{lemma}
 Let $\al \in (0,2)$ and $n \geq 2$. Then:
\begin{align*}
 \Lambda_{n}(b) < \Lambda_{1}(b)
\end{align*}

for all $b \in (0,1)$.

\end{lemma}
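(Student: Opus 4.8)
The plan is to read off the inequality directly from the Euler-type integral representation of $\Lambda_n$ in \eqref{defilambdan}. Writing
\begin{align*}
\Lambda_n(b) = \frac{1}{2^{1-\al}\Gamma\left(1-\frac{\al}{2}\right)^{2}} \int_{0}^{1} (b^{n-1}x^{n-1})\, x^{\frac{\al}{2}}(1-x)^{-\frac{\al}{2}}(1-b^{2}x)^{-\frac{\al}{2}}\,dx,
\end{align*}
one sees that $\Lambda_n(b)$ and $\Lambda_1(b)$ are integrals of the same positive weight $w(x) = x^{\frac{\al}{2}}(1-x)^{-\frac{\al}{2}}(1-b^{2}x)^{-\frac{\al}{2}}$ against the factors $b^{n-1}x^{n-1}$ and $1$ respectively. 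First I would record that $w$ is integrable on $(0,1)$ and strictly positive there: near $x=0$ the factor $x^{\al/2}$ is integrable, near $x=1$ the factor $(1-x)^{-\al/2}$ is integrable because $\al<2$, and $(1-b^{2}x)^{-\al/2}$ stays bounded on $[0,1]$ since $b<1$ gives $1-b^{2}x\geq 1-b^{2}>0$.

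Next, for $n\geq 2$, $b\in(0,1)$ and $x\in(0,1)$ we have $b^{n-1}\leq b<1$ and $x^{n-1}\leq x<1$, hence $b^{n-1}x^{n-1}<1$ pointwise on $(0,1)$. Multiplying by $w(x)>0$ and integrating over $(0,1)$ yields
\begin{align*}
\int_{0}^{1} (b^{n-1}x^{n-1})\,w(x)\,dx < \int_{0}^{1} w(x)\,dx,
\end{align*}
which, after multiplying both sides by the common positive constant $\left(2^{1-\al}\Gamma(1-\frac{\al}{2})^{2}\right)^{-1}$, is precisely $\Lambda_n(b)<\Lambda_1(b)$.

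There is essentially no obstacle here; the only point requiring a word of care is the integrability of the common weight at $x=1$, which is where the hypothesis $\al<2$ enters, and the strict positivity of $w$ on the open interval, which makes the strict pointwise inequality pass to a strict inequality of integrals. (One could alternatively quote Lemma \ref{lemajcreciente} evaluated together with the $b\to0$ or $b\to1$ behaviour from Lemma \ref{lambdanb1}, but the direct comparison above is shorter and self-contained.)
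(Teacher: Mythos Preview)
Your argument is correct: factoring the integral representation as $\Lambda_n(b)=\int_0^1 (bx)^{n-1}w(x)\,dx$ with the common positive, integrable weight $w(x)=x^{\al/2}(1-x)^{-\al/2}(1-b^2x)^{-\al/2}$ (times the positive constant) and using $(bx)^{n-1}<1$ on $(0,1)$ for $n\ge2$, $b\in(0,1)$ gives the strict inequality immediately. The integrability check at $x=1$ and the positivity of the prefactor $\bigl(2^{1-\al}\Gamma(1-\tfrac{\al}{2})^2\bigr)^{-1}$ for $\al\in(0,2)$ are exactly the small points that need mentioning, and you covered them.

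The paper itself does not give a proof here; it simply cites \cite[Lemma~5.2(1)]{delaHoz-Hassainia-Hmidi:doubly-connected-vstates-gsqg}. Your direct pointwise comparison of integrands is therefore more self-contained than what appears in the paper, and it is essentially the natural argument one would expect behind that citation. The parenthetical alternative you mention (via Lemma~\ref{lemajcreciente} and limiting behaviour) would also work but is indeed more roundabout.
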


\begin{proof}
 The proof can be found in \cite[Lemma 5.2(1)]{delaHoz-Hassainia-Hmidi:doubly-connected-vstates-gsqg}.
\end{proof}

\section{Basic integrals}

The following two lemmas will deal with the integrals that appear throughout the calculation of the linear operator:

\begin{lemma}\label{lemaA1}
Let $0 < b < 1, 0 < \al < 2, m \in \mathbb{N}$. We have that:
\begin{align*}
\frac{1}{2\pi}\int_{0}^{2\pi}\frac{\cos(my)}{(1+b^2-2b\cos(y))^{\al/2}}dy
= b^{m}\frac{\left(\frac{\al}{2}\right)_{m}}{m!} \hypg\left(\frac{\al}{2},m+\frac{\al}{2}; m+1; b^2\right)
\end{align*}
\end{lemma}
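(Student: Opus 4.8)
\textbf{Proof plan for Lemma \ref{lemaA1}.}

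The plan is to expand the integrand in its natural series, swap sum and integral, and recognize the result as a hypergeometric series. First I would observe that for $0<b<1$ the quantity $1+b^2-2b\cos(y)=|1-be^{iy}|^2$ is bounded below by $(1-b)^2>0$, so the binomial series
\begin{align*}
(1+b^2-2b\cos(y))^{-\al/2}=(1-be^{iy})^{-\al/2}(1-be^{-iy})^{-\al/2}=\sum_{j=0}^{\infty}\sum_{l=0}^{\infty}\frac{\left(\frac{\al}{2}\right)_j}{j!}\frac{\left(\frac{\al}{2}\right)_l}{l!}b^{j+l}e^{i(j-l)y}
\end{align*}
converges absolutely and uniformly in $y$. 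Multiplying by $\cos(my)$ and integrating term by term, only the terms with $|j-l|=m$ survive, yielding
\begin{align*}
\frac{1}{2\pi}\int_{0}^{2\pi}\frac{\cos(my)}{(1+b^2-2b\cos(y))^{\al/2}}dy=\sum_{l=0}^{\infty}\frac{\left(\frac{\al}{2}\right)_{l+m}}{(l+m)!}\frac{\left(\frac{\al}{2}\right)_l}{l!}b^{2l+m}.
\end{align*}

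Next I would rewrite the summand to exhibit the Pochhammer structure. Using $\left(\frac{\al}{2}\right)_{l+m}=\left(\frac{\al}{2}\right)_m\left(\frac{\al}{2}+m\right)_l$ and $(l+m)!=m!\,(m+1)_l$, the right-hand side becomes
\begin{align*}
b^m\frac{\left(\frac{\al}{2}\right)_m}{m!}\sum_{l=0}^{\infty}\frac{\left(\frac{\al}{2}+m\right)_l\left(\frac{\al}{2}\right)_l}{(m+1)_l}\frac{(b^2)^l}{l!}=b^m\frac{\left(\frac{\al}{2}\right)_m}{m!}\,\hypg\left(\frac{\al}{2},m+\frac{\al}{2};m+1;b^2\right),
\end{align*}
which is exactly the claimed identity. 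The convergence of the final hypergeometric series for $b^2<1$ is standard (the parameters satisfy $c-a-b=m+1-\al>-1$, in fact $>0$ since $\al<2$, so it even converges at $b=1$ when $\al<1$).

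The only mild subtlety — and the closest thing to an obstacle — is justifying the interchange of summation and integration; this is immediate from the uniform convergence of the double series on the compact set $y\in[0,2\pi]$ for fixed $b\in(0,1)$, which in turn follows from comparison with $\sum_{j,l}\frac{|(\al/2)_j|}{j!}\frac{|(\al/2)_l|}{l!}b^{j+l}=(1-b)^{-\al}<\infty$. Everything else is bookkeeping with Pochhammer symbols, so I would not belabor it.
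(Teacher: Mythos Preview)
Your argument is correct and complete: the factorization $1+b^2-2b\cos y=(1-be^{iy})(1-be^{-iy})$, the binomial expansion of each factor, the Fourier orthogonality picking out $|j-l|=m$, and the Pochhammer bookkeeping $\left(\frac{\al}{2}\right)_{l+m}=\left(\frac{\al}{2}\right)_m\left(\frac{\al}{2}+m\right)_l$, $(l+m)!=m!\,(m+1)_l$ are all sound, and the interchange of sum and integral is properly justified by the bound $\sum_{j,l}\frac{(\al/2)_j(\al/2)_l}{j!\,l!}b^{j+l}=(1-b)^{-\al}$.

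The paper itself does not give a self-contained proof of this lemma; it simply cites \cite[Lemma~3.2, Eq.~(3.19)]{delaHoz-Hassainia-Hmidi:doubly-connected-vstates-gsqg} and remarks that the argument there extends to the full range $0<\al<2$. Your generating-function computation is precisely the standard route to this identity and is almost certainly what the cited reference does as well, so there is no substantive methodological difference to discuss --- you have simply written out in full what the paper leaves to the literature.
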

\begin{proof}
See \cite[Lemma 3.2, Eq. (3.19)]{delaHoz-Hassainia-Hmidi:doubly-connected-vstates-gsqg}: their proof can be extended to the case $0 < \al < 2$.
\end{proof}

\begin{lemma}\label{lemmaA2}
Let $0 < b < 1, 0 < \al < 2, m \in \mathbb{N}$. We have that:
\begin{align*}
\frac{1}{m}\frac{\al}{2}\int_{0}^{2\pi}\frac{2\sin(y)\sin(my)}{(1+b^2-2b\cos(y))^{\al/2+1}}dy
= \frac{b^{m-1}\left(\frac{\al}{2}\right)_{m}}{m!}\hypg\left(\frac{\al}{2}, m +\frac{\al}{2}, m+1; b^2\right)
\end{align*}
\end{lemma}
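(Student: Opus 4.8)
\emph{Approach.} The decay factor $\sin y$ in the numerator is, up to a constant, $\pa_y$ of $\p{1+b^2-2b\cos y}^{-\al/2}$, so the plan is to integrate by parts in $y$; this turns the integral into exactly the one already evaluated in Lemma \ref{lemaA1}. (Differentiating Lemma \ref{lemaA1} in $b$ instead would introduce the numerator $2b-2\cos y$ together with a second, new integral, so it would be messier.)

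\emph{Steps.} First I would record the identity
\[
\pa_y\p{1+b^2-2b\cos y}^{-\al/2}=-\al b\,\sin y\,\p{1+b^2-2b\cos y}^{-\al/2-1},
\]
which lets me rewrite
\[
\frac{2\sin y\,\sin(my)}{\p{1+b^2-2b\cos y}^{\al/2+1}}=-\frac{2}{\al b}\,\sin(my)\,\pa_y\p{1+b^2-2b\cos y}^{-\al/2}.
\]
Then I would integrate over $[0,2\pi]$ and integrate by parts: the boundary term vanishes by $2\pi$-periodicity in $y$, and $\pa_y\sin(my)=m\cos(my)$, so
\[
\int_{0}^{2\pi}\frac{2\sin y\,\sin(my)}{\p{1+b^2-2b\cos y}^{\al/2+1}}\,dy=\frac{2m}{\al b}\int_{0}^{2\pi}\frac{\cos(my)}{\p{1+b^2-2b\cos y}^{\al/2}}\,dy.
\]
Multiplying by $\tfrac{\al}{2m}$ leaves $\tfrac1b\int_0^{2\pi}\tfrac{\cos(my)}{(1+b^2-2b\cos y)^{\al/2}}\,dy$, and Lemma \ref{lemaA1} evaluates the remaining integral; substituting its value and cancelling one power of $b$ produces $b^{m-1}\tfrac{(\al/2)_m}{m!}\hypg\p{\tfrac{\al}{2},m+\tfrac{\al}{2},m+1;b^2}$, which is the claim (with the same $\tfrac{1}{2\pi}$ normalisation as in Lemma \ref{lemaA1}, and $m\ge1$ so that $1/m$ is meaningful).

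\emph{Main difficulty.} I do not expect any real obstacle: the argument is essentially a single integration by parts. The only points that need attention are the vanishing of the boundary term (immediate from $2\pi$-periodicity) and keeping the normalising constants consistent with Lemma \ref{lemaA1}. If one wanted to avoid invoking Lemma \ref{lemaA1}, an alternative would be to expand $\p{1+b^2-2b\cos y}^{-\al/2-1}$ using the generating function for the Gegenbauer (ultraspherical) polynomials of index $\tfrac{\al}{2}+1$, extract the coefficient of $\sin(my)$ by orthogonality, and resum the resulting hypergeometric series into a ${}_2F_1$; but the integration-by-parts route is shorter and reuses work already in hand.
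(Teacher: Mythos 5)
Your argument is correct, but it takes a genuinely different (and shorter) route than the paper. The paper's proof writes $2\sin y\,\sin(my)=\cos((m-1)y)-\cos((m+1)y)$ and applies Lemma \ref{lemaA1} with $\al$ replaced by $\al+2$, which yields
\[
\frac{b^{m-1}\left(\frac{\al}{2}\right)_m}{m!}\left(\hypg\left(\tfrac{\al}{2}+1,\tfrac{\al}{2}+m,m;b^{2}\right)-\frac{b^{2}\left(\tfrac{\al}{2}+m\right)\left(\tfrac{\al}{2}+m+1\right)}{m(m+1)}\hypg\left(\tfrac{\al}{2}+1,\tfrac{\al}{2}+m+2,m+2;b^2\right)\right),
\]
and then collapses this combination into the single function $\hypg\left(\tfrac{\al}{2},m+\tfrac{\al}{2},m+1;b^2\right)$ by means of the three contiguous relations \eqref{312hmidi}, \eqref{hypergeometricidentity2} and \eqref{316hmidi}. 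Your single integration by parts, using $\pa_y(1+b^2-2b\cos y)^{-\al/2}=-\al b\sin y\,(1+b^2-2b\cos y)^{-\al/2-1}$ (the boundary term vanishes by periodicity, and the integrand is smooth since $0<b<1$ keeps the denominator away from zero), reduces the claim directly to Lemma \ref{lemaA1} at the \emph{same} value of $\al$ and removes all the contiguous-function bookkeeping; what the paper's route buys in exchange is that it stays entirely within the "evaluate $\int_0^{2\pi}\cos(ky)(1+b^2-2b\cos y)^{-\beta}dy$ plus Gauss relations" framework inherited from the doubly connected V-states papers. You were also right to flag the normalisation: as literally printed the identity is off by a factor $2\pi$ (for $m=1$ and $b\to 0$ the stated left side tends to $\al\pi$ while the right side tends to $\al/2$), so the left-hand side must carry the same $\frac{1}{2\pi}$ as Lemma \ref{lemaA1}; this is the reading used when the lemma is invoked in the computation of the off-diagonal entries of $M_n^{\al}(b)$, where the missing $2\pi$ is absorbed by the constant $c_{\al}$, and it is exactly the identity your integration by parts proves.
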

\begin{proof}
Using the trigonometric addition formulas and Lemma \ref{lemmaA2}, the LHS is equal to

\begin{align*}
\frac{b^{m-1}}{m!}\left(\frac{\al}{2}\right)_{m}\left(
\hypg\left(\frac{\al}{2}+1,\frac{\al}{2}+m,m;b^{2}\right)
 - \frac{b^{2}}{m(m+1)}\left(\frac{\al}{2}+m\right)\left(\frac{\al}{2}+m+1\right)\hypg\left(\frac{\al}{2}+1,\frac{\al}{2}+m+2,m+2;b^2\right)
\right)
\end{align*}

Combining formulas \eqref{312hmidi} with $a = \frac{\al}{2}, b = \frac{\al}{2} + m + 1, c = m+1$, \eqref{hypergeometricidentity2} with $a = \frac{\al}{2}+1, b = \frac{\al}{2}+m, c = m$ and \eqref{316hmidi} with $a = \frac{\al}{2}, b = \frac{\al}{2}+m, c = m+1$ yields the result.

\end{proof}

 \bibliographystyle{abbrv}
 \bibliography{references}

 \begin{tabular}{l}
% \textbf{Angel Castro} \\
%   {\small Departamento de Matem\'aticas} \\
%  {\small Universidad Aut\'onoma de Madrid} \\
%  {\small Instituto de Ciencias Matem\'aticas-CSIC}\\
%  {\small C/ Nicolas Cabrera, 13-15, 28049 Madrid, Spain} \\
%   {\small Email: angel\_castro@icmat.es} \\
% \\
% \textbf{Diego C\'ordoba} \\
%   {\small Instituto de Ciencias Matem\'aticas} \\
%  {\small Consejo Superior de Investigaciones Cient\'ificas} \\
%  {\small C/ Nicolas Cabrera, 13-15, 28049 Madrid, Spain} \\
%   {\small Email: dcg@icmat.es} \\
% \\
 \textbf{Javier G\'omez-Serrano} \\
 {\small Department of Mathematics} \\
 {\small Princeton University}\\
 {\small 610 Fine Hall, Washington Rd,}\\
 {\small Princeton, NJ 08544, USA}\\
  {\small Email: jg27@math.princeton.edu} \\
   \\
 
 \end{tabular}

\end{document}